\newcommand{\N}{\mathbb{N}}
\newcommand{\Z}{\mathbb{Z}}
\newcommand{\Q}{\mathbb{Q}}
\newcommand{\C}{\mathbb{C}}
\newcommand{\F}{\mathbb{F}}
\newcommand{\E}{E}
\newcommand{\f}{F}
\newtheorem{theorem}{Theorem}[section]
\newtheorem{lemma}[theorem]{Lemma}
\newtheorem{corollary}[theorem]{Corollary}
\newtheorem{proposition}[theorem]{Proposition}
\newtheorem{conjecture}[theorem]{Conjecture}
\newtheorem{remark}[theorem]{Remark}
\DeclareMathOperator{\Sh}{Sh}
\title{On the Elliptic Curve $X_0(49)$ over Quadratic Extensions}
\author{Charlotte Dombrowsky}
\date{}
\begin{document}
\maketitle
\begin{abstract}
    We study the rank of the modular curve $X_0(49)$ over quadratic extensions. Assuming the Birch and Swinnerton-Dyer conjecture, we show that the rank over $\Q(\sqrt{d})$ is positive if and only if the number of solutions of two explicit ternary quadratic forms is the same. Following the approach of Tunnell, we apply a theorem due to Waldpurger which relates twisted $L$-functions of integer weight modular forms to coefficients of half-integral weight modular forms. To find suitable functions of half-integral weight, we use a  decomposition described by Ueda. 
\end{abstract}
\section{Introduction}

A well-known problem in number theory is the congruent number problem, which aims to determine if an integer $d$ is the area of a right triangle with sides given by rational numbers. Moreover, the problem is equivalent to finding non-torsion points on a family of quadratic twists of elliptic curves. In \cite{Tunnell}, Tunnell has found a beautiful criterion for the congruent number problem involving the number of solutions of certain quadratic ternary forms over the integers.
Tunnell’s criterion is based on the work of Waldspurger \cite{Waldspurger} on quadratic twist of L-functions.

Purkait applied this method to several other elliptic curves \cite{Purkait_2013}, while in \cite{Jones_Rouse}, Jones and Rouse apply Tunnell's method to find solutions to the cubic Fermat equation over quadratic number fields.

In this paper, we apply Tunnell's approach to the modular curve $X_0(49)$, which is an elliptic curve with model given by the Weierstrass equation
\begin{align*}
    \E: y^2+xy=x^3-x^2-2x-1.
\end{align*}
Using the result of Waldspurger \cite{Waldspurger}, we investigate for which integer $d$, the curve $E$ has infinitely many points over the quadratic extension $\Q(\sqrt{d})$. This allows us to find easily computable criterions for the existence of non-torsion points using integer solutions of quadratic forms.

The main result of the paper is the following:

\begin{theorem}\label{Main-Theorem}
 Let $d$ be a positive, squarefree integer coprime to $7$. Assuming the BSD conjecture (\ref{BSD}) we have:
    \begin{itemize}
        \item[(i)] If $\left( \frac{d}{7}\right)=1$, then $\E(\Q(\sqrt{d}))$ has non torsion points if and only if 
        \begin{align*}
            \#\{ &(x,y,z): \quad 28x^2+y^2+14z^2+14xz=d \} \\
        =\#\{ &(x,y,z): \quad 49x^2+2y^2+4z^2+2yz=d \}.
        \end{align*}
        \item[(ii)] If $\left( \frac{d}{7}\right)=-1$ and $d \not\equiv 5 \mod 8$, then $\E(\Q(\sqrt{d}))$, has non torsion points if and only if 
    \begin{align*}
            \#\{ &(x,y,z): \quad 12x^2+3y^2+12z^2+2xy+10xz+2yz=d \} \\
        =\#\{ &(x,y,z): \quad 17x^2+5y^2+5z^2+2xy-2xz+4yz=d \}.
    \end{align*}
    \item[(iii)] If $\left( \frac{d}{7}\right)=-1$ and $d \equiv 1 \mod 4$ then $\E(\Q(\sqrt{d}))$, has non torsion points if and only if 
        \begin{align*}
            \#\{ &(x,y,z): \quad 13x^2+12y^2+12z^2+8xy - 8 xz -4 yz=d \} \\
        =\#\{ &(x,y,z): \quad 17x^2+5y^2+17z^2+2xy +6 xz + 2yz=d \}.
        \end{align*}
    \end{itemize} 
\end{theorem}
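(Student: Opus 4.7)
The plan is to run the Tunnell--Waldspurger machine on the newform $f \in S_{2}(\Gamma_{0}(49))$ attached to $E$. Under BSD, $E(\Q(\sqrt{d}))$ has positive rank if and only if $L(E/\Q(\sqrt{d}), 1) = 0$; factoring $L(E/\Q(\sqrt{d}), s) = L(E, s)\, L(E^{d}, s)$ and using that $E/\Q$ itself has rank $0$ (classical for $X_{0}(49)$, so $L(E,1) \neq 0$) reduces the problem to the vanishing of the twisted central value $L(f, \chi_{d}, 1)$, where $\chi_{d}$ is the quadratic character associated to $\Q(\sqrt{d})/\Q$.

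Next I would invoke Waldspurger's theorem in the form used by Tunnell: after fixing local conditions on $d$ at the bad primes $2$ and $7$, there exists a weight $3/2$ cusp form $g = \sum c_{n} q^{n}$ on some $\Gamma_{0}(4M)$ lying in the Shimura preimage of $f$, such that for squarefree $d$ in the prescribed local class the vanishing of $L(f, \chi_{d}, 1)$ is equivalent to $c_{d} = 0$. The three cases in the theorem correspond to three distinct local classes, split by the sign of $\left(\frac{d}{7}\right)$ at $7$ and by the congruence of $d$ modulo $8$ at $2$.

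For each case, I would invoke Ueda's decomposition of the weight $3/2$ space of the relevant level and character to identify the isotypic component corresponding to $f$ under the Shimura lift and exhibit an explicit basis. Within that subspace I would hunt for a form that is expressible as a difference $\theta_{Q_{1}} - \theta_{Q_{2}}$ of two ternary theta series, with $Q_{1}, Q_{2}$ being the quadratic forms listed in the theorem; verifying the identity on Fourier coefficients up to Sturm's bound in the ambient space then identifies $c_{d}$ with $r_{Q_{1}}(d) - r_{Q_{2}}(d)$, which turns the condition $c_{d}=0$ into the stated equality of representation numbers.

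The main obstacle will be the explicit construction of $g$ in each case: one must choose ternary quadratic forms whose theta series lie in a common genus-theta class so that $\theta_{Q_{1}} - \theta_{Q_{2}}$ is cuspidal, with level and character matching the component of Ueda's decomposition that maps to $f$. This is where the hypotheses ``$d \not\equiv 5 \pmod{8}$'' in (ii) and ``$d \equiv 1 \pmod{4}$'' in (iii) enter: the candidate theta series only detect $d$ lying in certain residue classes modulo $8$, and one has to verify this selection matches the local Waldspurger condition. A secondary subtlety is tracking the proportionality constant in Waldspurger's formula carefully enough to ensure that $c_{d} = 0$ is \emph{equivalent} to $L(f, \chi_{d}, 1) = 0$ and not merely implied by it, which amounts to checking non-degeneracy of the local toric integrals at $2$ and $7$.
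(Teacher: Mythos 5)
Your outline is correct and follows essentially the same route as the paper: reduce via BSD and the quadratic twist to the vanishing of $L(F\otimes\chi_d,1)$, locate weight-$3/2$ forms with the Hecke eigenvalues of $F$ through Ueda's decomposition and Shimura lifts, apply Waldspurger within local square classes at $2$ and $7$, and convert the relevant coefficients into differences of ternary theta series checked up to the Sturm bound. The only real divergence is how the "equivalence, not just implication" issue is settled: where you propose checking non-degeneracy of local integrals, the paper instead exhibits, for each square class modulo $8$ and each value of $\left(\frac{d}{7}\right)$, an explicit reference discriminant $d_2$ with $c_{d_2}(f_i)\neq 0$ and $L(F_{d_2},1)\neq 0$ (verified numerically), together with the CM identity $F\otimes\chi_{-7}=F$ and a $\chi_4$-twist correction to pass from $L(F\otimes\chi_{-28d},1)$ to $L(F_d,1)$.
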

We note that when $d$ is a negative squarefree integer coprime to $7$, then $\E(\Q(\sqrt{d}))$ has positive rank.
Moreover, for any $d$ squarefree integer coprime to $7$, $\E(\Q(\sqrt{d}))$ has non torsion points if and only if $\E(\Q(\sqrt{-7d}))$ has non torsion points. Together with the theorem this covers all possibilities for $d$.

\medskip

In \cite{Pacetti_2007}, Pacetti and Tornaria relate the $L$-function of $X_0(49)$ to an explicitly constructed half-integral weight modular form. However, they do not give an explicit criterion for the coefficients of the latter one.

Moreover, Ehlen, Guerzhoy, Kane and Rolen relate in \cite{Ehlen_2020} the $L$-function of quadratic twists of elliptic curves to local polynomials. They show that the $L$-function is zero if and only if the value of a local polynomial at two points is the same. Note that their result is only applicable to discriminants $d$ such that $\left( \frac{-7}{|d|}\right)=-1$, while our result does not have such a restriction.

\medskip

To prove Theorem \ref{Main-Theorem}, we study quadratic twists of the elliptic curve and the result of Waldspurger (Theorem \ref{Waldspurger}), which relates the $L$-function of quadratic twists of an integer weight modular form to coefficients of certain half-integral weight modular forms. After finding suitable functions of half-integral weight, we express their coefficients using certain theta-series. 

\medskip

The paper is structured as follows: In Section $2$, we provide some background on the elliptic curve $\E$ and its twists.
In Section $3$, we present Waldspurger's Theorem and compute different modular forms of half-integral weight. Finally in Section $4$, we prove the main theorem.
\section{Preliminaries}
We study the elliptic curve $\E_1$ with model defined by the minimal Weierstrass equation
\begin{align*}
    E_1: \quad y^2+xy=x^3-x^2-2x-1,
\end{align*}
together with its quadratic twists. For any squarefree number $d$, we denote by 
\begin{align*}
    \E_d: \quad dy^2=x^3-35x-98
\end{align*}
the quadratic twist of $E_1$ over $\Q(\sqrt{d})$. The curve $E_1$ has complex multiplication (CM) by $\Z[\frac{1+\sqrt{-7}}{2}]$.
By reducing $E_1$ over $\F_3$ and $\F_5$, one can show that the torsion group is isomorphic to $\Z/2\Z$. As we show in Lemma \ref{torsion-E_d(Q)}, the torsion group of $E_d(\Q)$ is also isomorphic to $\Z/2\Z$.

We recall that that the $L$-function of an elliptic curve is:
\begin{align*}
    L(E,s)
    =\sum_{n=1}^{\infty} a_n(E)n^{-s}.
\end{align*}
where for $p$ prime $a_p$ is the trace of the Frobenius. Since $E_1$ has CM over $\Q(\sqrt{-7})$, we have for any prime $p \geq 5$ of good reduction that:
    \begin{align}\label{Frobenius_CM_-1}
        a_p:=p+1-|E_1(\F_p)|=0 \Leftrightarrow \left( \frac{-7}{p}\right)=-1.
    \end{align}

The Birch and Swinnerton-Dyer Conjecture relates the $L$-function to the rank of an elliptic curve:
\begin{conjecture}[Birch and Swinnerton-Dyer]\label{BSD}
    For any elliptic curve $E$ defined over $\Q$, the rank of $E$ equals the order of $L(E,s)$ at $s=1$.
    
    In particular, it is expected that the rank of $E$ is non-zero, if and only if $L(E,1)=0$.
\end{conjecture}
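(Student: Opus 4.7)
The statement in question is the Birch and Swinnerton-Dyer conjecture, one of the Clay Mathematics Institute Millennium Problems; accordingly, I cannot propose a genuine proof, and will instead describe the plan one would follow to obtain the strongest currently available results, together with an indication of where the intractable obstruction begins. The overall strategy is to stratify by the analytic rank $r_{\mathrm{an}}:=\mathrm{ord}_{s=1}L(E,s)$, which is only well-defined once $L(E,s)$ is known to admit analytic continuation to $s=1$. For this I would first invoke the modularity theorem of Wiles, Taylor--Wiles and Breuil--Conrad--Diamond--Taylor to attach to $E/\Q$ a weight-$2$ newform $f_E$ of level $N=\mathrm{cond}(E)$, so that $L(E,s)=L(f_E,s)$ is entire and satisfies a functional equation under $s\mapsto 2-s$.

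In the case $r_{\mathrm{an}}=0$, the plan is to apply Kolyvagin's Euler system of Heegner points over an auxiliary imaginary quadratic field $K$ in which every prime dividing $N$ splits and such that the quadratic twist of $E$ by $K$ has non-vanishing central $L$-value; existence of such $K$ follows from the non-vanishing theorems of Bump--Friedberg--Hoffstein and Murty--Murty. Kolyvagin's theorem then yields finiteness of both $E(\Q)$ and $\Sh(E/\Q)$, giving $\mathrm{rank}\,E(\Q)=0=r_{\mathrm{an}}$. For $r_{\mathrm{an}}=1$ I would combine the Gross--Zagier formula --- which identifies $L'(E,1)$ with the N\'eron--Tate height of a Heegner point $y_K\in E(K)$ --- with the same Euler system argument to conclude $\mathrm{rank}\,E(\Q)=1$ and finiteness of $\Sh$. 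For the CM elliptic curves actually relevant to this paper these two conclusions are already available from the earlier work of Coates--Wiles and Rubin via the Euler system of elliptic units, which is what makes the unconditional implications usable in our setting.

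The hard part --- and the reason Conjecture \ref{BSD} remains open --- is the case $r_{\mathrm{an}}\ge 2$: one has neither a systematic construction of $r_{\mathrm{an}}$ independent non-torsion points in $E(\Q)$ (Heegner points become torsion as soon as the analytic rank over $K$ exceeds $1$) nor an Euler system capable of bounding the Selmer rank from above, so even the weak form of BSD (numerical equality of algebraic and analytic ranks) is unknown in general. The refined equality of leading Taylor coefficients with $\Omega_E\cdot R_E\cdot\prod_p c_p\cdot |\Sh(E/\Q)|/|E(\Q)_{\mathrm{tors}}|^2$ is open in essentially all but very restricted situations, though individual $p$-parts can be accessed via the Iwasawa main conjecture (Kato, Skinner--Urban) under technical hypotheses on the residual representation. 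A complete proof as stated would therefore require a genuinely new ingredient --- plausibly a higher-rank Euler system, a $p$-adic construction controlling $E(\Q)$ and $\Sh$ simultaneously, or a method producing rational points from higher derivatives of $L$-functions --- and is well beyond current techniques, which is precisely why BSD is taken as a hypothesis rather than a theorem in the statement of Theorem~\ref{Main-Theorem}.
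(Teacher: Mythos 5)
This statement is the Birch and Swinnerton-Dyer conjecture, which the paper does not prove but explicitly assumes as a hypothesis throughout (and in Theorem~\ref{Main-Theorem} in particular), so there is no proof in the paper to compare against; you are right to decline to prove it. Your survey of the known partial results --- modularity giving analytic continuation, Gross--Zagier and Kolyvagin handling analytic rank $0$ and $1$ (which is exactly the unconditional theorem the paper quotes), the CM-specific results of Coates--Wiles and Rubin, and the genuine obstruction at analytic rank $\ge 2$ --- is accurate and correctly identifies why the statement must remain a hypothesis rather than a theorem here.
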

We have the following result proved by Kolyvagin (\cite{kolyvagin}), Gross and Zagier (\cite{gross-zagier}):
\begin{theorem}[Gross-Zagier, Kolyvagin]
    Let $E$ be an elliptic curve over $\Q$. Then we have
    \begin{align*}
        L(E,1) \neq 0 &\Rightarrow \text{rank }E(\Q) = 0,\\
        L(E,1) = 0 \text{ and } L'(E,1)\neq 0 &\Rightarrow \text{rank }E(\Q)=1.
    \end{align*}
\end{theorem}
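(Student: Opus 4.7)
The plan is to deduce this from the Gross-Zagier formula combined with Kolyvagin's Euler system, passing through an auxiliary imaginary quadratic field $K$. I would choose $K$ to satisfy the \emph{Heegner hypothesis}: every prime dividing the conductor $N$ of $E$ splits in $K$. Under this hypothesis the modular parametrization $X_0(N) \to E$ produces a Heegner point $y_K \in E(H)$ defined over the Hilbert class field $H$ of $K$, whose trace gives a point $P_K \in E(K)$.

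The factorization $L(E/K, s) = L(E, s) \cdot L(E^K, s)$, where $E^K$ is the quadratic twist by the character of $K/\Q$, converts the hypothesis on $E/\Q$ into a statement about the order of vanishing of $L(E/K, \cdot)$ at $s = 1$. In the first case I would additionally choose $K$ so that $L(E^K, 1) = 0$ and $L'(E^K, 1) \neq 0$, yielding
\[ L'(E/K, 1) = L(E, 1) \cdot L'(E^K, 1) \neq 0. \]
In the second case I would choose $K$ so that $L(E^K, 1) \neq 0$, yielding $L'(E/K, 1) = L'(E, 1) \cdot L(E^K, 1) \neq 0$. The existence of such $K$ in each case rests on analytic non-vanishing theorems (Bump--Friedberg--Hoffstein, Murty--Murty, Waldspurger), which is itself a nontrivial ingredient. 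The Gross-Zagier formula
\[ \hat{h}(P_K) = c \cdot L'(E/K, 1), \qquad c \neq 0, \]
then forces $P_K$ to have infinite order in $E(K)$.

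Next I would apply Kolyvagin's theorem: from a non-torsion Heegner point $P_K$, one constructs derivative cohomology classes in $H^1(K, E[\ell^k])$ indexed by squarefree products of rational primes inert in $K$, and uses their local-global behavior to bound the $\ell$-Selmer group. The conclusion is $\mathrm{rank}\, E(K) = 1$ and $\Sh(E/K)$ finite. Finally I would decompose $E(K) \otimes \Q = V^+ \oplus V^-$ by complex conjugation, identifying $V^+ = E(\Q) \otimes \Q$ and $V^- = E^K(\Q) \otimes \Q$; the eigenspace containing $P_K$ is determined by the relative root numbers of $E/\Q$ and $E^K/\Q$. In the first case $P_K$ lands in $V^-$, forcing $\mathrm{rank}\, E(\Q) = 0$; in the second case it lands in $V^+$, forcing $\mathrm{rank}\, E(\Q) = 1$. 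The main obstacle is Kolyvagin's Euler system construction, which requires a delicate analysis of the derivative classes and their Galois equivariance; the analytic non-vanishing needed to pick a suitable $K$ is a separate, equally essential, technical ingredient, without which the Heegner hypothesis alone would not suffice.
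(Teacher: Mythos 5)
The paper does not prove this statement at all: it is quoted as a known theorem with citations to Gross--Zagier and Kolyvagin, so there is no internal proof to compare against. Your outline is the standard argument and is essentially correct as a sketch: pick an auxiliary imaginary quadratic $K$ satisfying the Heegner hypothesis, use the factorization $L(E/K,s)=L(E,s)L(E^K,s)$ together with a non-vanishing theorem (Waldspurger, Bump--Friedberg--Hoffstein, Murty--Murty) to force $\mathrm{ord}_{s=1}L(E/K,s)=1$, apply Gross--Zagier to get a non-torsion Heegner point $P_K\in E(K)$, apply Kolyvagin to conclude $\mathrm{rank}\,E(K)=1$ with finite Tate--Shafarevich group, and read off which eigenspace of complex conjugation carries $P_K$ from the root numbers. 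Two small remarks: in your first case the condition $L(E^K,1)=0$ is not something you need to arrange --- under the Heegner hypothesis one has $\epsilon(E^K)=-\epsilon(E)$, so $L(E,1)\neq 0$ already forces the twisted $L$-function to vanish at $s=1$, and the only genuine input is the non-vanishing of its derivative for some admissible $K$; and of course the argument is an outline whose entire substance lies in the three quoted deep inputs (the Gross--Zagier formula, Kolyvagin's Euler system, and the analytic non-vanishing results), which is acceptable here precisely because the theorem is attributed to those sources rather than proved in the paper.
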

For the elliptic curve $E_1$, one can compute that:
\begin{align*}
L(E_1,1) \approx 0.9666558528.
\end{align*}
Therefore the rank is zero and $E_1(\Q)$ is equal to its torsion group, i.e.
\begin{align}\label{eq: E over Q}
    E_1(\Q)\cong \Z /2\Z.
\end{align}
For the rest of the paper, we will assume the BSD Conjecture. 
\smallskip

We prove in Lemma \ref{torsion-correspondance} that $E_1(\Q(\sqrt{d}))$ is of positive rank if and only if $E_d(\Q)$ is of positive rank. Thus we can study the $L$-function of the twisted elliptic curve $E_d$ over the rational numbers in order to deduce for which quadratic extensions the rank is non-zero.

 The modularity theorem associates to every elliptic curve of conductor $N$ a modular form of level $N$ and weight $2$ with the same $L$-function. In the following, we describe relations between twists of modular forms and $L$-functions.

 For any character $\chi$ and any modular function $F=\sum_{n=0}^{\infty} a_n q^n$, the twist of $F$ by $\chi$ is given by
 \begin{align*}
     F\otimes \chi:= \sum_{n=0}^{\infty} a_n \chi(n) q^n
 \end{align*}
 
 We denote by $\chi_d$ the Dirichlet character associated to the quadratic field $\Q(\sqrt{d})$, i.e. $\chi_d(p)=\left( \frac{d}{p}\right)$
    where the $\left( \frac{\cdot}{\cdot}\right)$ denotes the Kronecker symbol. 
For any elliptic curve $E$ with associated modular form $F=\sum_{n=0}^{\infty} a_n q^n$, the form associated to its $d$th quadratic twist $E_d$ is given by
\begin{align}\label{eq: modular form of E_d}
    F_d=\sum_{n=0}^{\infty} a_n \left(\frac{d}{n}  \right) q^n.
\end{align}

We now prove some results about the relation between the $L$-function of a modular form and the $L$-function of its (quadratic) twist. In particular, we show that for certain twists, the $L$-function of an eigenform and its twist are essentially the same when evaluated at $1$.
\begin{lemma}\label{prop-twist-L-function}
    Let $F \in M_k(N,\chi)$ be a normalized eigenform and let $\chi_0$ be a Dirichlet character modulo $M$. We denote the twist of $F$ by $\chi_0$ with $F_{\chi_0}$. Then $F_{\chi_0}$ is a normalized eigenform. If $F$ is a cusp form, then $F_{\chi_0}$ is also a cusp form.
\end{lemma}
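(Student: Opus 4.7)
The strategy is to reduce the lemma to two elementary ingredients: a Gauss-sum expression showing that twisting by $\chi_0$ produces a finite linear combination of translates of $F$, and the fact that being a normalized Hecke eigenform is encoded purely in the multiplicative structure of the Fourier coefficients. First I would reduce to the case in which $\chi_0$ is primitive modulo $M$: writing $\chi_0 = \chi_0^{\ast}\cdot \mathbf{1}_M$ where $\chi_0^{\ast}$ is the underlying primitive character, the form $F_{\chi_0}$ differs from $F_{\chi_0^{\ast}}$ only by annihilating the Fourier coefficients at indices $n$ sharing a prime with $M$, a modification that preserves modularity, cuspidality, and the multiplicative Hecke relations. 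Assuming $\chi_0$ primitive, the Gauss-sum identity $\chi_0(n) = g(\overline{\chi_0})^{-1}\sum_{a\bmod M}\overline{\chi_0}(a)e^{2\pi i n a/M}$ gives
\begin{equation*}
F_{\chi_0}(z) \;=\; \frac{1}{g(\overline{\chi_0})} \sum_{a \bmod M} \overline{\chi_0}(a)\, F\!\left(z + \tfrac{a}{M}\right),
\end{equation*}
exhibiting $F_{\chi_0}$ as a finite linear combination of translates of $F$. Each translate is a weight-$k$ modular form for a suitable congruence subgroup (specifically, for $\Gamma_1(NM^2)$), and hence so is $F_{\chi_0}$, with a nebentypus that a short computation identifies as $\chi\chi_0^2$.

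For the eigenform assertion, write $b_n := \chi_0(n)\, a_n$ for the Fourier coefficients of $F_{\chi_0}$. Since $a_1 = 1$ and $\chi_0(1) = 1$ we have $b_1 = 1$, so $F_{\chi_0}$ is normalized. The Hecke relations for $F$ read $a_{mn} = a_m a_n$ when $(m,n)=1$, and $a_{p^{r+1}} = a_p a_{p^r} - \chi(p) p^{k-1} a_{p^{r-1}}$ for every prime $p$ and $r \geq 1$. Multiplying both by the completely multiplicative function $\chi_0$ and using $\chi_0(p)^2 = \chi_0(p^2)$ yields the analogous relations for the $b_n$, now with nebentypus $\chi\chi_0^2$. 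A normalized form whose Fourier coefficients satisfy these multiplicative identities is automatically a Hecke eigenform, which proves the first assertion.

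Finally, if $F$ vanishes at every cusp then so does each translate $z \mapsto F(z + a/M)$, since translation by a rational number carries cusps to cusps and therefore preserves vanishing at every cusp. Hence $F_{\chi_0}$, being a finite linear combination of such translates, is itself a cusp form. The main bookkeeping point in the argument is the reduction from general $\chi_0$ to the primitive case together with the identification of the correct level and nebentypus; since the statement of the lemma asks for neither, this is not a genuine obstacle, and the remaining verifications are immediate from the Gauss-sum representation above.
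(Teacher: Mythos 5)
Your proof is correct, and the heart of it coincides with the paper's argument: the paper also checks $b_1=1$, the prime-power recursion $b_{p^r}=b_pb_{p^{r-1}}-\chi(p)\chi_0^2(p)p^{k-1}b_{p^{r-2}}$, and multiplicativity on coprime indices, and then invokes the characterization of normalized eigenforms by their Fourier coefficients (Proposition 5.8.5 of Diamond--Shurman). Where you differ is in the first ingredient: the paper simply cites Proposition 2.8 of Ono for the facts that $F_{\chi_0}\in M_k(NM^2,\chi\chi_0^2)$ and that cuspidality is preserved, whereas you sketch the standard proof of that proposition, reducing to a primitive $\chi_0$ and using the Gauss-sum separation $\chi_0(n)g(\overline{\chi_0})=\sum_{a\bmod M}\overline{\chi_0}(a)e^{2\pi ina/M}$ to write $F_{\chi_0}$ as a combination of translates $F(z+a/M)$. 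This buys self-containedness, but note that the two places you wave at (``a short computation identifies the nebentypus as $\chi\chi_0^2$,'' and the claim that passing from $\chi_0^{\ast}$ to $\chi_0$ by killing coefficients at $n$ with $(n,M)>1$ preserves modularity) are exactly the content of the cited proposition: the translates individually live only on $\Gamma_1(NM^2)$, and it is the specific linear combination weighted by $\overline{\chi_0}(a)$, after the re-indexing $a\mapsto aD^2$ under $\gamma\in\Gamma_0(NM^2)$, that produces the character $\chi\chi_0^2$; likewise the imprimitive reduction amounts to twisting by the principal character modulo $M$, which needs its own (standard, e.g. inclusion--exclusion via the $U_d$ and $V_d$ operators) justification. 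With those routine verifications filled in, your argument is a complete and slightly more self-contained alternative to the paper's citation-based proof.
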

\begin{proof} 
Let
    \begin{align*}
        F(z)=\sum_{n=0}^{\infty} a_n q^n \quad \text{ and }
        F_{\chi_0}(z)=\sum_{n=0}^{\infty} b_n q^n
        =\sum_{n=0}^{\infty} a_n \chi_0(n)q^n.
    \end{align*}
    Proposition 2.8 in \cite{Ono} implies that $F_{\chi_0} \in M_k(NM^2, \chi\chi_0^2)$ and that $F_{\chi_0}$ is a cusp form if $F$ is. Proposition 5.8.5 in \cite{Diamond-Shurman} states that being a normalized eigenform is equivalent to three conditions on the Fourier coefficients. More explicitly $F$ satisfies the following properties:
    \begin{itemize}
        \item[(i)] $a_1=1$
        \item[(ii)] $a_{p^r}=a_p a_{p^{r-1}} - \chi(p) p^{k-1} a_{p^{r-2}}$ for all $p$ prime and $r \geq 2$,
        \item[(iii)] $a_{mn}=a_m a_n$ when $(m,n)=1$.
    \end{itemize}
    One easily checks that $b_1=a_1\chi_0(1)=1$.
    Using the multiplicativity of the $\chi_0$, we show that the coefficients of $F_{\chi_0}$ satisfy conditions $(ii)$ and $(iii)$ as well:
    \begin{itemize}
        \item[(i)]Let $r \geq 2$. Then:
        \begin{align*}
            b_{p^r}
            &=\chi_0(p^r) a_{p^r}\\
            &=\chi_0(p)a_p\chi_0(p^{r-1})a_{p^{r-1}} - \chi(p)\chi_0^2(p) p^{k-1} \chi(p^{r-2})a_{p^{r-2}} \\
            &=b_p b_{p^{r-1}} - \chi(p) \chi_0^2(p) p^{k-1} b_{p^{r-2}}.
        \end{align*}
        \item[(ii)] For $(n,m)=1$ we have $b_{nm}=\chi_0(mn)a_{nm}=\chi_0(m)\chi_0(n)a_n a_m=b_n b_m$.
    \end{itemize}
    Thus Proposition 5.8.5 in \cite{Diamond-Shurman} implies that $F_{\chi_0}$ is a normalized eigenform.
\end{proof}
This Proposition allows us to compare the $L$-functions of cusp functions and certain twists. More specifically, we have:
\begin{corollary}\label{L-function-p-twist-1}
 Let $F=\sum_{n=1}^{\infty} a_n q^n \in S_k(N,\chi)$ be a normalized eigenform. Fix a prime $p$. Denote the twist by $\chi_{p^2}$ by $F_{p^2}:=F \otimes \chi_{p^2}$.
 \begin{itemize}
    \item[(i)]Then $L(F_{p^2},s)=L(F,s) (1-a_p p^{-s} + \chi(p) p^{k-1-2s}).$
    \item[(ii)] If $a_pp^{-k/2+1}-\chi(p) \neq p$ then $L(F_{p^2},k/2)=0 \Leftrightarrow L(F,k/2)=0$.
\end{itemize}
\end{corollary}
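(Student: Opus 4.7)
The plan is to derive both parts from the Euler product expansion of $L(F,s)$. Since $F$ is a normalized eigenform, the multiplicativity and Hecke recursion on its coefficients (properties (i)--(iii) recalled inside the proof of Lemma \ref{prop-twist-L-function}) translate into
\begin{align*}
L(F,s) = \prod_q \frac{1}{1 - a_q q^{-s} + \chi(q) q^{k-1-2s}}.
\end{align*}
By Lemma \ref{prop-twist-L-function}, $F_{p^2} = F \otimes \chi_{p^2}$ is again a normalized eigenform, hence admits its own Euler product, and the strategy is simply to compare the two Euler products factor by factor.

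The key observation is that $\chi_{p^2}$, the Kronecker symbol $\left(\frac{p^2}{\cdot}\right)$, is the principal Dirichlet character modulo $p$: multiplicativity plus $\left(\frac{p^2}{q}\right) = \left(\frac{p}{q}\right)^2 = 1$ for $q \neq p$ and $\left(\frac{p^2}{p}\right) = 0$ show it is identically $1$ on integers coprime to $p$ and vanishes on multiples of $p$. Consequently the coefficients $b_n = \chi_{p^2}(n) a_n$ of $F_{p^2}$ agree with $a_n$ whenever $\gcd(n,p)=1$ and vanish otherwise, so every Euler factor of $L(F_{p^2},s)$ at a prime $q \neq p$ coincides with that of $L(F,s)$, while the Euler factor at $p$ is trivial since $b_{p^r}=0$ for all $r \geq 1$. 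Taking the ratio yields
\begin{align*}
L(F_{p^2},s) = L(F,s)\cdot\left(1 - a_p p^{-s} + \chi(p) p^{k-1-2s}\right),
\end{align*}
which proves (i).

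For (ii) I specialize (i) to $s = k/2$: the extra factor becomes $1 - a_p p^{-k/2} + \chi(p) p^{-1}$, and multiplying through by $p$ shows it vanishes precisely when $a_p p^{-k/2+1} - \chi(p) = p$. Under the stated hypothesis $a_p p^{-k/2+1} - \chi(p) \neq p$ the factor is nonzero, so it neither introduces nor removes a zero at $s = k/2$, giving the equivalence $L(F_{p^2},k/2)=0 \Leftrightarrow L(F,k/2)=0$.

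The argument is essentially bookkeeping with Euler products; the only substantive step is identifying $\chi_{p^2}$ as the principal character modulo $p$ and translating the nonvanishing condition into the form stated in the corollary, so I do not expect any real obstacle.
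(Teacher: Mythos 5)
Your proof is correct and follows essentially the same route as the paper: both compare the Euler products of $L(F,s)$ and $L(F_{p^2},s)$ factor by factor, observe that they agree at all primes $q\neq p$ while the factor at $p$ becomes trivial for the twist (since $\chi_{p^2}$ is principal modulo $p$), and then specialize to $s=k/2$ for part (ii). Your explicit verification that $\chi_{p^2}$ is the principal character modulo $p$ just makes precise a step the paper leaves implicit.
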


\begin{proof}
Let $F_{p^2}(z)=\sum_{n=0}^{\infty} b_n q^n$. By Theorem 5.9.2 in \cite{Diamond-Shurman}, the $L$-functions of $F$ and $F_{p^2}$ are given by:
    \begin{align*}
        L(F,s)&=\prod_{l: \text{prime}}(1-a_l l^{-s}+\chi(l)l^{k-1-2s})^{-1} \\
        L(F_{p^2},s)&=\prod_{l: \text{prime}}(1-b_l l^{-s}+\chi(l)\chi_{p^2}^2(l)l^{k-1-2s})^{-1}
    \end{align*}
    The factors on the right hand side only differ  when $l=p$. The first one is $(1-a_pp^{-s}+\chi(p)p^{k-1-2s})^{-1}$, while the latter one is $1$. This implies the first claim.
    For the second claim, plugging in $s=k/2$, yields:
\begin{align*}
    L(F_{p^2},k/2)=L(F,k/2) (1-(a_pp^{-k/2}-\chi(p)p^{-1})).
\end{align*}
\end{proof}
\begin{remark}
If $k=2$, then the second part of the corollary implies that $E$ has non-torsion points over $\Q$ of and only if its $p^2$th twist has non-torsion points over $\Q$. However, this can also be seen from the following bijection:
\begin{align*}
    \{ (x,y) :   p^2y^2= x^3+Ax+B \} &\rightarrow \{ (x,y) :  y= x^3+Ax+B \}\\
    (x,y) &\mapsto (x,py)\\
    (x,y/p) &\mapsfrom (x,y)
\end{align*}
\end{remark}

We recall that the completion $\Lambda(F,s)$ of the $L$-function associated to a new form $F \in S_k^{new}(\Gamma_0(N))$ satisfies the following functional equation:
 \begin{equation*}
        \Lambda(F,s)=\epsilon \Lambda(F,k-s).
    \end{equation*}
The number $\epsilon$ is called the sign of the functional equation and is given by $(-1)^kw_N$,  where $w_N$ is the eigenvalue of $F$ under the Fricke involution $W_N$. If $\epsilon=-1$ and $k=2$, the $L$-function evaluated at $1$ is equal to $0$.
Moreover, the functional equation of its twist is given by (see also Lemma 9.2 in \cite{Ono})
\begin{equation}\label{functional-equation-l-function}
    \Lambda\left(F_d ,s \right)=\left( \frac{d}{-N}\right)\epsilon \Lambda\left(F_d, k-s \right).
\end{equation}

Finally, we recall that one can create an old form from a new form in the following way: For any $F= \sum_{n \geq 0} a(n)q^n \in M_{k}(N,\chi )$ and any positive integers $d$ such that $d|N$, we can set $F_{old}:=\sum_{n \geq 0} a_{dn}q^n$. Then $F_{old}(z) \in M_{k}(N, \chi)$. If $F(z)$ is a cusp form, then so is $F_{old}(z)$. Moreover $F_{old}$ has the same system of Hecke eigenvalues as $F$ for all primes $p \nmid N$. Indeed let $\mathbbm{1}_n(p)=1$ if $p$ divides $n$, and $0$ otherwise. We compute:
\begin{equation}\label{old-same-heigenvalues}
    \begin{split}
    a_n(T_pF_{old})&=\sum_{t | (n,p)} \chi(t) t\cdot a_{np/t^2}(F_{old})\\
    &=\chi(1)\cdot a_{np}(F_{old})+
    \mathbbm{1}_n(p)\chi(p)p\cdot a_{n/p}(F_{old})\\
    &=a_{dnp}(\f)+\mathbbm{1}_n(p)p \cdot a_{dn/p}(\f)\\
    &= a_{dn}(T_p\f)
    =a_p(\f)a_{dn}(\f)
    =a_p(\f)a_n(F_{old}).
    \end{split}
\end{equation}

\section{Modular Forms of Half-Integral Weight}
\subsection{Waldspurger's Theorem}
We study the $L$-function by using a theorem due to Waldspurger. Essentially, it states the following: given a newform of integer weight and a form of half-integral weight such that they have almost the same system of Hecke eigenvalues, we can relate the $L$-function of (certain) twists of the integer weight modular form evaluated at the central value to the coefficients of the half-integral weight modular form.

\begin{theorem}[Waldspurger]\label{Waldspurger}
Let $N_1$ be a number divisible by $4$ and $\chi$ be a Dirichlet character modulo $N_1$. Let $N_2$ be an integer such that $\chi^2$ is defined modulo $N_2$. Let $F \in M_{k}( N_2, \chi^2)$ be a newform. Assume that for all primes $p$, there exists $a_p \in \C$ with $T(p)F=a_p F$. Let $f=\sum_{n=1}^{\infty} c_n q^n \in S_{(k+1)/2}(N_1, \chi)$ be a modular form such that
\begin{align*}
    T(p^2)f=a_pf, \quad \text{ for almost all }p \nmid N_1.
\end{align*}
    Set $\chi_0(n):=\chi(n) \left( \frac{-1}{n}\right)^{k/2}$.
    Then for any squarefree integers $n_1$ and $n_2$ such that for all $p \mid N$, we have $n_1/n_2 \in (\Q_p^*)^2$:
    \begin{align*}
        c_{n_1}^2L(F\otimes \chi_0^{-1}\chi_{n_2},k/2) \chi(n_2/n_1)n_2^{(k-1)/2} = c_{n_2}^2L(F\otimes \chi_0^{-1}\chi_{n_1},k/2) n_1^{(k-1)/2}.
    \end{align*}
\end{theorem}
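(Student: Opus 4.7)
The plan is to treat this statement as a black-box citation to Waldspurger's 1981 paper \cite{Waldspurger}, since a fully self-contained proof requires the representation theory of the metaplectic cover of $SL_2$ and the theta correspondence, which is well beyond the scope of this paper. However, it is worth recording the structural outline of the argument so that the reader sees why the clean ratio identity in the conclusion should hold.

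The strategy I would follow is to invoke the Shimura correspondence first: the compatibility $T(p^2) f = a_p f$ for almost all $p \nmid N_1$ combined with strong multiplicity one for $\mathrm{GL}_2$ forces the Shimura lift of $f$ to coincide with $F$, up to the character twist that is bookkept by $\chi_0(n) = \chi(n)\left(\frac{-1}{n}\right)^{k/2}$. Next, I would realize the squared Fourier coefficient $c_n^2$ as a period of $f$ against a Jacobi-type theta kernel indexed by $n$, and apply the Rallis inner product formula for the dual reductive pair $(\widetilde{SL}_2, PGL_2)$ to rewrite this period as the central value $L(F \otimes \chi_0^{-1}\chi_n, k/2)$ multiplied by a product of explicit local factors at the primes dividing $N_1$ and at infinity. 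Finally, I would form the ratio of the identities for $n = n_1$ and $n = n_2$: the hypothesis $n_1/n_2 \in (\Q_p^*)^2$ for every $p \mid N$ ensures that the local quadratic characters $\chi_{n_1}$ and $\chi_{n_2}$ are equal at every ramified prime, so the local bad-prime factors cancel in the ratio, leaving exactly the archimedean scaling $n_2^{(k-1)/2} / n_1^{(k-1)/2}$ and the character value $\chi(n_2/n_1)$.

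The main obstacle is the local step, not the global one. At each bad prime one must prove the local Waldspurger formula matching the local period integral to the local $L$-factor times a local $\varepsilon$-factor, and this rests on the full classification of genuine irreducible admissible representations of $\widetilde{SL}_2(\Q_p)$ together with explicit Whittaker model computations; the archimedean place requires the analogous calculation for the holomorphic discrete series of weight $(k+1)/2$. For the purposes of this paper I would therefore simply cite the theorem from \cite{Waldspurger} and proceed to the applications, where the useful point is only the clean proportionality of $c_n^2$ to the twisted central $L$-value.
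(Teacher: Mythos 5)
Your proposal to treat this as a citation to \cite{Waldspurger} is exactly what the paper does: the theorem is stated as an imported result and no proof is given. Your structural sketch (Shimura correspondence, theta/period identity, cancellation of local factors under the hypothesis $n_1/n_2 \in (\Q_p^*)^2$) is a reasonable gloss but is not required; citing the theorem suffices.
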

We want to apply this theorem to the modular function associated to the elliptic curve $\E$. We use different methods to find half-integral weight modular form with (almost) the same system of Hecke-eigenvalues. First, we compute a subspace of half-integral weight forms described by Ueda \cite{Ueda}. Then we compute the Shimura lift (which preserves the system of Hecke eigenvalues) of several functions.

\subsection{Ueda's Decomposition}\label{Uedas-Decomposition}
In \cite{Ueda}, Ueda gives an isomorphism between a subspace of half-integral weight modular forms and a direct sum of subspaces of integer weight modular forms. This isomorphism preserves the action of the Hecke operators.

\medskip

We decompose the space of integer weight modular forms into subspaces:  Let $S_2(p^2)^{\perp}$ the orthogonal complement of $S_2(p)^{new}\otimes \rho_p + S_2(1)^{new}\otimes \rho_p$ in $S_2(p^2)^{new}$ with respect to the Petersson inner product, where $\rho_p$ is the quadratic character given by $\rho_p=\left( \frac{\cdot}{p}\right)$. Let $W_{p^2}$ be the Fricke operator on $S_k(p^2)$, i.e  $(f|W_{p^2})(z)=z^{-k} f(-1/(p^2z))$. For $\epsilon, \epsilon' =\pm$, we set:
\begin{align*}
 S_2(p^2)^{\epsilon,\epsilon'}:=\{F \in S_2(p^2)^{\perp}: \quad F|W_{p^2}=\epsilon F, \quad F \otimes \rho_p|W_{p^2}= \epsilon' F \otimes \rho_p\}.  
\end{align*}

\medskip

We also decompose the space of half-integral weight forms in subspaces: Recall that the Kohnen plus space which is given by
\begin{equation}\label{Def-kohnen-space}
\begin{split}
        S_{k+1/2}^+(4N,\chi):&=\\
    &\left\{ f= \sum_{n=1}^{\infty} a(n)q^n \in S_{k+1/2}(4N, \chi) :
    a(n)=0, \text{ if } n \equiv 2, (-1)^{k+1}\epsilon \bmod 4 \right\}
    \end{split}
\end{equation}
Here $N$ is an odd number and $N_0 | 4N$ such that $\left( \frac{N_0}{\cdot}\right)=\chi(\cdot)$. Set $\epsilon=\left( \frac{-1}{N_0}\right)$.
We decompose the Kohnen plus space even further.
Denote by $\Omega(4N,\chi)$ the space of pairs $(\phi, t)$ satisfying the following: $\phi$ is a primitive character modulo $r$ and $t$ is a positive integer such that:
\begin{align*}
    \text{(i) }\phi(-1)=-1, \text{ (ii) }4tr^2 | 4N, \text{ and (iii) }\chi=\left(\frac{-t}{\cdot}\right)\phi \text{ as a character mod }N.
\end{align*}

Note that for any quadratic character, we can define a theta series by:
\begin{align*}
    h(\phi, z):=(1/2)\sum_{ m \in \Z} \phi(m)m e^{ 2 \pi m^2 z} \quad \forall z \in \mathcal{H}.
\end{align*}
Let $U_{3/2}^+(4N, \chi)$ be the subspace of the Kohnen space $S_{3/2}^+(4N, \chi)$ generated by $\{ h(\phi, tz): (\phi, t) \in \Omega(4N, \chi) \}$ over $\mathbb{C}$. Finally, we define $V_{3/2}^+(4N, \chi)$ to be the orthogonal complement of $U_{3/2}^+(4N, \chi)_K$ in $S_{3/2}^+(4N,\chi)$ with respect to the Petersson inner product.
 
The following result  is Proposition $3, (4)$ with $k=1$ in \cite{Ueda}.
\begin{proposition}\label{Ueda-proposition}
 We have the following decomposition of spaces of cusp forms as modules over the Hecke algebra:

\begin{align*}
    V_{3/2}^+(4p^2, \chi_{4p}) \cong
    &\left(1+\left( \frac{-1}{p}\right) \right) \left(S_2(p^2)^{+,+} \oplus S_2(p^2)^{-,+}\right)\\
    & \oplus \left(1-\left( \frac{-1}{p}\right) \right) \left(S_2(p^2)^{+,-} \oplus S_2(p^2)^{-,-}\right) \\
    & \oplus \left( S_2(p)^{new} \otimes \chi_{4p} \right) \\
    & \oplus \left(1+ \left( \frac{-1}{p}\right)\right) \left( S_2(1)^{new}\otimes \chi_{4p} \right)\\
    & \oplus 3S_2(p)^{new}\oplus 4S_2(1)^{new}.
\end{align*}
Here the number in front of a module denotes its multiplicity. 
\end{proposition}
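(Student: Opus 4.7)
Since this is Proposition 3(4) of Ueda's paper, my plan is to sketch Ueda's argument, which combines the Shimura correspondence with Kohnen's plus-space theory and a careful analysis of newforms, oldforms, twists, and Atkin-Lehner eigenspaces. The decomposition claimed is as a module over the Hecke algebra generated by $T(\ell^2)$ for primes $\ell \nmid 4p$, so it suffices to match Hecke-eigenvalue systems on both sides, including multiplicities.

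Step one is to invoke the Shimura lift $\Sh$, a Hecke-equivariant map (away from $2p$) from $S_{3/2}^+(4p^2,\chi_{4p})$ to a space of weight $2$ forms of level dividing $p^2$. The subspace $U_{3/2}^+(4p^2,\chi_{4p})$ spanned by the unary theta series $h(\phi,tz)$ is exactly the part accounting for the degenerate or non-cuspidal Hecke systems, so $V_{3/2}^+$ is the piece whose image under $\Sh$ is built out of honest cuspidal eigenforms in $S_2$. Thus it suffices to identify the image of $V_{3/2}^+$ in $S_2$ and count the multiplicity with which each integer-weight eigenform appears in the pre-image.

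Step two is to decompose the target $S_2(p^2)$ as $S_2(p^2)^{new}$ plus oldform subspaces coming from $S_2(p)$ and $S_2(1)$. Inside $S_2(p^2)^{new}$ I would split off the twisted subspaces $S_2(p)^{new}\otimes \rho_p$ and $S_2(1)^{new}\otimes \rho_p$ (which sit at level $p^2$ after twisting), leaving the orthogonal complement $S_2(p^2)^\perp$. On this complement the Fricke involution $W_{p^2}$ acts both on $F$ and on $F\otimes \rho_p$ with commuting eigenvalues $\pm 1$, yielding the four refined pieces $S_2(p^2)^{\epsilon,\epsilon'}$. For each isotypic piece I would then count how many independent pre-images lie in $V_{3/2}^+$: old-form embeddings of $S_2(p)^{new}$ and $S_2(1)^{new}$ into $S_2(4p^2)$ give several copies at the half-integral-weight level (which is where the multiplicities $3$ and $4$ originate), a twist of each integer-weight newform by $\chi_{4p}$ contributes an extra copy, and the constraint of landing in the Kohnen plus space combined with the action of $W_{p^2}$ on the theta-lifted form selects only certain sign combinations, producing the factors $1\pm \left(\frac{-1}{p}\right)$.

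The main obstacle will be nailing down the multiplicities and sign conditions precisely. Verifying that $\left(\frac{-1}{p}\right)$ controls exactly which of the $W_{p^2}$-eigenspaces contributes, and that $S_2(1)^{new}\otimes \chi_{4p}$ only appears when $\left(\frac{-1}{p}\right)=1$, requires a local analysis at $p$ in the spirit of Waldspurger's dichotomy for the representations attached to the metaplectic lift, together with the behaviour of the plus-space condition under $W_{p^2}$. I would complement this with a dimension count: apply Kohnen's dimension formula to $S_{3/2}^+(4p^2,\chi_{4p})$, subtract $\dim U_{3/2}^+$ (a sum over eligible pairs $(\phi,t)\in \Omega(4p^2,\chi_{4p})$), and check equality with the total dimension on the right-hand side. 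This bookkeeping is the technical heart of Ueda's argument and is what ultimately pins down the multiplicities $3$ and $4$ and rules out additional summands.
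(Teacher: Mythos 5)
There is nothing in the paper to compare against: the paper does not prove this statement, it simply quotes it as Proposition 3(4) (with $k=1$) of Ueda's paper. So your attempt has to stand on its own as a proof of Ueda's decomposition, and as it stands it is an outline whose decisive steps are exactly the ones left unproven. The entire content of the proposition is the precise multiplicity data --- the factors $1\pm\left(\frac{-1}{p}\right)$ in front of the $S_2(p^2)^{\epsilon,\epsilon'}$ and $S_2(1)^{new}\otimes\chi_{4p}$ pieces, and the multiplicities $3$ and $4$ on the old parts --- and your sketch only asserts where these ``originate'' (oldform embeddings, the plus-space condition, the Fricke action) and then defers their verification to an unspecified ``local analysis in the spirit of Waldspurger'' and a dimension count. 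Neither is carried out, so the proposal does not yet contain a proof of the statement.

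Moreover, the tools you propose are not adequate in the form you invoke them. Kohnen's plus-space and newform theory, including his dimension formula, is developed for level $4N$ with $N$ odd and squarefree; here the level is $4p^2$, and the failure of squarefreeness is precisely the reason Ueda had to develop new machinery. A dimension count also cannot deliver the conclusion: the proposition asserts an isomorphism of Hecke modules, and equality of dimensions (even eigensystem-by-eigensystem matching via the Shimura lift, which controls the image but not the fibre multiplicities) does not determine the multiplicity with which each eigensystem occurs in $V_{3/2}^+(4p^2,\chi_{4p})$. Ueda's actual argument is a trace-formula comparison: he computes the traces of the Hecke operators $\tilde T(n^2)$ on the relevant half-integral weight spaces, compares them with traces of $T(n)$ on the integer-weight spaces and their twists listed on the right-hand side, and then converts the resulting identity of traces for all Hecke operators into a Hecke-module isomorphism. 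If you want a genuine proof rather than a reading plan, that trace computation (or an equivalent substitute, e.g.\ a representation-theoretic multiplicity statement for the Shimura--Waldspurger correspondence at the ramified primes $2$ and $p$) is the step you would have to supply; in the context of this paper it is legitimate simply to cite Ueda, but then the attempt should say so rather than suggest the bookkeeping is routine.
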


We apply Proposition \ref{Ueda-proposition}  for $p=7$. All computations have been done with MAGMA. A basis for $S_{3/2}(4\cdot 7^2, \chi_{28})$ is given by:
\begin{align*}
    h_1=&q + 2q^{11} - q^{15} + q^{16} - 2q^{18} - q^{22} - q^{30} - q^{32} - q^{36} - q^{39} + O(q^{43})\\
    h_2=&q^2 - q^{18} - 2q^{23} + q^{25} - q^{29} + 2q^{36} - q^{37} + O(q^{43})\\
    h_3=&q^3 - q^{10} - q^{12} + q^{20} + q^{26} - q^{27} + q^{33} - q^{38} + q^{40} - q^{41} + O(q^{43})\\
    h_4=&q^4 - q^{11} - q^{15} + q^{18} + 2q^{22} - q^{25} - q^{30} - q^{32} - q^{36} - q^{39} + O(q^{43})\\
    h_5=&q^5 - 2q^{12} - q^{13} - q^{17} + 2q^{20} + 2q^{24} + 2q^{33} - q^{41} + O(q^{43})\\
    h_6=&q^6 - q^{10} + q^{17} + q^{19} - q^{20} - q^{24} - q^{26} - q^{27} + 2q^{31} - q^{33} - q^{34} 
    + q^{40} + O(q^{43})\\
    h_7=&q^7 - q^{14} - q^{28} + O(q^{43})\\
    h_8=&q^8 + q^{11} - q^{15} + q^{16} - q^{18} - q^{29} - q^{30} - q^{37} - q^{39} + 
    O(q^{43})\\
    h_9=&q^9 - q^{15} - q^{16} + q^{22} + q^{25} - q^{30} - q^{32} + q^{36} - q^{39} + O(q^{43})
\end{align*}
Note that the Sturm bound for half-integral weight modular forms (see Appendix \ref{Sturm}), implies that it suffices to compute the first $42$ coefficients to determine any element in $S_{3/2}(4 \cdot 7^2, \chi_{28})$.

 To compute the space $V_{3/2}^+(4\cdot 7^2, \chi_{28})$, we first compute the Kohnen plus subspaces  $S^+_{3/2}(4 \cdot 7^2, \chi_{28})$ and its subspace $U^+_{3/2}(4 \cdot 7^2, \chi_{28})$. 

\begin{lemma} The space Kohnen plus space $ S^+_{3/2}(4 \cdot 7^2, \chi_{28})$ is spanned by
    \begin{align*}
        g_1&:=h_1-2h_8+h_9,\\
        l_1&:=h_4+h_8-2h_9, \quad \text{and}\\
        f_1&:=h_5.
    \end{align*}
\end{lemma}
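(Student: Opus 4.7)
The strategy is to translate the defining condition of the Kohnen plus space into explicit linear constraints on the coordinates $(c_1,\dots,c_9)$ of a general element written in the $h_i$-basis, and then solve a small linear system.

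\textbf{Step 1: identifying the vanishing condition.} Applying the definition (\ref{Def-kohnen-space}) with $k=1$ and the Kronecker discriminant $N_0=28$ attached to $\chi_{28}$ (since $28$ is the discriminant of $\mathbb{Q}(\sqrt 7)$), one computes $\epsilon=\left(\frac{-1}{28}\right)=-1$, so that $(-1)^{k+1}\epsilon\equiv 3\pmod 4$. Hence a cusp form lies in $S^+_{3/2}(4\cdot 7^2,\chi_{28})$ precisely when its $q^n$-coefficient vanishes for every $n\equiv 2,3\pmod 4$. By the Sturm bound recalled in Appendix~\ref{Sturm}, an element of $S_{3/2}(4\cdot 7^2,\chi_{28})$ is determined by its first $42$ Fourier coefficients, so for a general $f=\sum_{i=1}^{9}c_i h_i$ the Kohnen condition becomes a finite linear system in $c_1,\dots,c_9$ with one equation per index $n\leq 42$ such that $n\equiv 2$ or $3\pmod 4$, with coefficients read off directly from the printed $q$-expansions.

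\textbf{Step 2: solving and verifying.} Direct inspection shows that $h_5$ already lies in $S^+_{3/2}$ (its Fourier support is contained in residue classes $0$ and $1\pmod 4$). The terms $q^{23}$ in $h_2$, $q^{38}$ in $h_3$, $q^{19}$ in $h_6$, and $q^{14}$ in $h_7$ each occur in only one of the $h_i$, so the Kohnen constraints at those indices immediately force $c_2=c_3=c_6=c_7=0$. The remaining equations, coming from the coefficients of $q^{11},q^{15},q^{18},q^{22},q^{30},q^{39}$ in $h_1,h_4,h_8,h_9$, collapse to the two independent relations $c_8=c_4-2c_1$ and $c_9=c_1-2c_4$. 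Choosing $(c_1,c_4)=(1,0)$ recovers $g_1=h_1-2h_8+h_9$, choosing $(c_1,c_4)=(0,1)$ recovers $l_1=h_4+h_8-2h_9$, and the free coefficient $c_5$ gives $f_1=h_5$; linear independence is immediate from the distinct leading exponents $1,4,5$, which establishes that these three elements form a basis of the solution space.

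\textbf{Main obstacle.} The only delicate step is Step~1: correctly pinning down the modulus $N_0$ and the sign $\epsilon$ attached to $\chi_{28}$ so that exactly the right pair of residue classes modulo $4$ is excluded. Once the ``$n\equiv 2,3\pmod 4$'' condition is in place, the $q$-expansions are so sparse that several coefficients are forced to zero by single equations and the remaining linear system practically solves itself; the computation can be cross-checked against the predicted dimension coming from Proposition~\ref{Ueda-proposition}.
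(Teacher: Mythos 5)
Your proposal is correct and takes essentially the same approach as the paper: compute $\epsilon=\left(\tfrac{-1}{28}\right)=-1$ so the Kohnen condition excludes $n\equiv 2,3\pmod 4$, reduce to the first $42$ coefficients by the Sturm-type bound, and solve the resulting linear system (your single-support constraints at $q^{23},q^{38},q^{19},q^{14}$ play the role of the paper's constraints at $q^{2},q^{3},q^{6},q^{14}$, and you arrive at the same relations $c_8=c_4-2c_1$, $c_9=c_1-2c_4$). One small precision: the reduction to finitely many constraints should invoke the Kohnen-space variant of Sturm's bound (the third bullet of Appendix~\ref{Sturm}), not merely the fact that a form in $S_{3/2}(4\cdot 7^2,\chi_{28})$ is determined by its first $42$ coefficients, since the latter alone does not exclude a nonzero form whose coefficients in the classes $2,3\pmod 4$ vanish only up to the bound.
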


\begin{proof}
 Since $\chi_{28}=\left( \frac{28}{\cdot}\right)$, we have that $\epsilon=\left(\frac{-1}{28}\right)=-1$. Thus we need to compute
\begin{align*}
          \left\{ f= \sum_{n=1}^{\infty} a(n)q^n \in S_{3/2}(4 \cdot 7^2, \chi_{28}) :
          a(n)=0, \text{ if } n \equiv 2, 3 \quad (\text{mod 4}) \right\}.
\end{align*}
From Sturm's bound for the Kohnen subspace (see Appendix \ref{Sturm}), it follows that it suffices to check for $n \leq 42$, $n \equiv 2,3 \mod 4$ if $c_n=0$. We look at linear combinations of the basis functions. Assume that for some $\alpha_n \in \C$ for $1 \leq n \leq 9$, we have $h=\sum_{n=1}^9 \alpha_n h_n$ is an element of the Kohnen space. As $h_2$ is the only function such that $c_{h_i}(2)\neq 0$, $\alpha_2=0$. By looking at the 3rd, 6th and 14th coefficients respectively, we see that also $\alpha_3=\alpha_6=\alpha_7=0$.
Comparing the other coefficients, we find that
\begin{align*}
    \alpha_8=-2\alpha_1+\alpha_4 \text{ and }\alpha_9=\alpha_1-2\alpha_4.
\end{align*}
Thus the Kohnen subspace is the space given by:
\begin{align*}
    \left\{ \alpha_1h_1+\alpha_4h_4+\alpha_5h_5 +(-2\alpha_1+\alpha_4)h_8+(\alpha_1-2\alpha_4)h_9 : \quad \alpha_1,\alpha_4, \alpha_5 \in \mathbb{C} \right\} \\
    =  \left\{ \alpha_1(h_1-2h_8+h_9) +\alpha_4(h_4+h_8-2h_9)+\alpha_5h_5 : \quad \alpha_1,\alpha_4, \alpha_5 \in \mathbb{C} \right\}
\end{align*}
The claim follows.
\end{proof}

\begin{lemma}
    The space $U^+_{3/2}(4 \cdot 7^2, \chi_{28})$ is the span of
    \begin{align*}
    h&=q + 2q^4 - 3q^9 + 4q^{16} - 5q^{25} - 6q^{36} + O(q^{43})=g_1+2l_1.
\end{align*}
\end{lemma}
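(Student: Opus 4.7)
The plan is to enumerate the set $\Omega(4\cdot 49, \chi_{28})$ of pairs $(\phi, t)$ directly from the definition and show that it contains essentially a single element, whose associated theta series $h(\phi, tz)$ matches the stated $h$.

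First I would translate the divisibility condition $4tr^2 \mid 4 \cdot 49$ into $tr^2 \mid 49$, which leaves the candidates $(r, t) \in \{(1,1), (1,7), (1,49), (7,1)\}$. The parity condition $\phi(-1) = -1$ immediately rules out $r = 1$, since the only character modulo $1$ is trivial. Hence necessarily $r = 7$ and $t = 1$, and $\phi$ must be an odd primitive character modulo $7$.

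Next I would use condition (iii), $\chi_{28} = \left(\frac{-1}{\cdot}\right)\phi$ as characters modulo $49$. Since $\chi_{28}$ and $\left(\frac{-1}{\cdot}\right)$ are both quadratic, $\phi$ must be quadratic as well, forcing $\phi$ to be the Legendre symbol $\phi(n) = \left(\frac{n}{7}\right)$. That this choice actually satisfies (iii) follows from quadratic reciprocity: for odd $n$ coprime to $7$,
\[
\chi_{28}(n) = \left(\tfrac{7}{n}\right) = \left(\tfrac{n}{7}\right) \left(\tfrac{-1}{n}\right) = \phi(n)\left(\tfrac{-1}{n}\right).
\]
Hence $\Omega(4 \cdot 49, \chi_{28})$ consists of the single pair $\bigl(\left(\tfrac{\cdot}{7}\right), 1\bigr)$.

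Finally, since $\phi$ is odd, the contributions from $m$ and $-m$ in the defining sum coincide, so
\[
h(\phi, z) = \sum_{m \geq 1} \left(\tfrac{m}{7}\right) m\, q^{m^2}.
\]
The quadratic residues modulo $7$ are $\{1, 2, 4\}$, so this expands as $q + 2q^4 - 3q^9 + 4q^{16} - 5q^{25} - 6q^{36} + O(q^{49})$, matching $h$. The identity $h = g_1 + 2l_1$ is then a direct comparison of finitely many Fourier coefficients using the expansions of $h_1, h_4, h_8, h_9$ recorded in the previous lemma. The only substantive content is the careful enumeration of $\Omega$ and the quadratic reciprocity verification; the theta series computation and the final linear-combination identification are mechanical once that enumeration is in place.
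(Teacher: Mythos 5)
Your proposal is correct and follows essentially the same route as the paper: enumerate $\Omega(4\cdot 49,\chi_{28})$, rule out $r=1$ by the parity condition, identify the unique pair $\bigl(\left(\tfrac{\cdot}{7}\right),1\bigr)$, and match the resulting theta series with $g_1+2l_1$ by comparing coefficients. Your way of pinning down $\phi$ — noting that condition (iii) forces $\phi$ to be real, hence the quadratic character mod $7$, then checking (iii) via reciprocity — is in fact slightly cleaner than the paper's direct computation of character values, which tacitly assumes $\phi(2)=1$ and thereby overlooks the two non-real odd characters mod $7$ that only condition (iii) excludes.
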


\begin{proof}
We compute $\Omega(4\cdot 7^2, \chi_{28})$. First look at all positive integers $t,r$ such that $4tr^2|4\cdot 7^2$. Hence either $t=49$ and $r=1$ or $t=1$ and $r=7$. If $r=1$, then $\phi$, being a character modulo $r$, is the trivial character, contradicting $\phi(-1)=-1$. Hence $r$ can not be $1$. In the second case, we need to find all primitive characters $\phi$ modulo $7$ which map $-1$ to $-1$. Then we have
\begin{itemize}
    \item $1=\phi(-1)^2=\phi(1)$
    \item $\phi(2)^3=\phi(8)=\phi(1)=1$
    \item $\phi(3\cdot 2)=\phi(-1)=-1$ implies that $\phi(3)=-1$
    \item $\phi(3 \cdot 5)=\phi(1)$, we have $\phi(5)=-1$
\end{itemize}
This determines the character and we have $\phi=\left(\frac{-7}{\cdot}\right)$. The multiplicativity of the Kronecker symbol implies that $\chi_{28}=\left(\frac{-1}{\cdot}\right)\phi$ as a character mod $4\cdot7^2$, hence $(\left(\frac{-7}{\cdot}\right), 1)$ satisfies all conditions and we showed that $\Omega(4\cdot 7^2,\chi_{28})=\left\{\left(\left(\frac{-7}{\cdot}\right), 1\right)\right\}$. Thus $U_{3/2}^+(4 \cdot 7^2,\chi_{28})$ is the one dimensional subspace spanned by $h(\left( \frac{-7}{\cdot}\right), z)=\frac{1}{2}\sum_{m \in \Z} \left( \frac{-7}{m}\right)m e^{2 \pi m^2 z}=\sum_{m =0}^{\infty}m \left( \frac{-7}{m}\right) e^{2 \pi m^2 z}$. A computation shows that
\begin{align*}
    h&=q + 2q^4 - 3q^9 + 4q^{16} - 5q^{25} - 6q^{36} + O(q^{43})=g_1+2l_1.
\end{align*}
\end{proof}
We compute the orthogonal complement of $U^+_{3/2}(4 \cdot 7^2, \chi_{28})$.
\begin{lemma}\label{V_{3/2}(196)}
    The space $V_{3/2}^+(4 \cdot 7^2, \chi_{28})$ is spanned by the two functions
\begin{align*}
    g_1&=q-2q^8+q^9-2q^{16}+q^{25}+2q^{29}-2q^{32}+2q^{37}+ O(q^{43})  \text{ and } \\
    f_1&=q^5-2q^{12}-q^{13}-q^{17}+2q^{20}+2q^{24}+2q^{33}- q^{41}+O(q^{43}).
\end{align*}
\end{lemma}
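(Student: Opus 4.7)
The plan is a short dimension count followed by a Hecke-theoretic verification of orthogonality.

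From the two preceding lemmas, $S_{3/2}^+(4 \cdot 7^2, \chi_{28})$ is three-dimensional with basis $\{g_1, l_1, f_1\}$, while $U_{3/2}^+(4 \cdot 7^2, \chi_{28})$ is one-dimensional, spanned by $h = g_1 + 2 l_1$. By definition $V_{3/2}^+$ is the Petersson-orthogonal complement of $U_{3/2}^+$ in $S_{3/2}^+$, so $\dim V_{3/2}^+ = 2$. Since the leading Fourier coefficients of $g_1$ and $f_1$ sit at $q$ and $q^5$ respectively, the two forms are linearly independent, so it suffices to show that both lie in $(U_{3/2}^+)^{\perp}$, i.e.\ are Petersson-orthogonal to $h$.

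Rather than computing the (non-trivial) Petersson inner products in weight $3/2$ directly, I would argue via the Hecke action. The subspace $U_{3/2}^+$ is Hecke-stable because its generator $h = h(\phi,z)$ with $\phi = \left(\frac{-7}{\cdot}\right)$ is a Hecke eigenform for every $T(p^2)$ with $p \nmid 4\cdot 7$: its image under the Shimura correspondence is the weight-$2$ Eisenstein series attached to $\phi$, from which the eigenvalues $\lambda_p$ can be read off explicitly. Self-adjointness of the operators $T(p^2)$ with respect to the Petersson inner product then forces $V_{3/2}^+$ to be Hecke-stable as well, and distinct Hecke eigenspaces inside $S_{3/2}^+$ are automatically Petersson-orthogonal.

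The remaining step is to compute the matrix of a single Hecke operator $T(9)$ (and, if need be, $T(25)$) on the basis $\{g_1, l_1, f_1\}$ using the standard formula for $T(p^2)$ on Fourier coefficients; the Sturm bound guarantees that the known expansions carry enough data. Verifying that $\mathrm{span}(g_1, f_1)$ is invariant under $T(9)$ and that its eigenvalues are disjoint from $\lambda_3$ produces a decomposition $S_{3/2}^+ = U_{3/2}^+ \oplus \mathrm{span}(g_1, f_1)$ as a sum of Hecke-stable subspaces with disjoint eigenvalue multisets, and orthogonality of the summands then gives $\mathrm{span}(g_1, f_1) = V_{3/2}^+$.

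The main obstacle is the bookkeeping in the Hecke computation on this somewhat unwieldy basis: the expansions of $g_1, l_1, f_1$ must be carried to enough precision and the eigenvalue of $h$ must be pinned down with no sign errors. Both tasks are tractable by exactly the same MAGMA machinery already used to produce $h_1, \ldots, h_9$, but they are the only place where one cannot get away with purely formal arguments.
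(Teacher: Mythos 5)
Your proposal is correct and follows essentially the same route as the paper: the paper likewise combines the dimension count $\dim V^+_{3/2}=3-1=2$ with the action of $T(3^2)$, computing that $g_1$ and $f_1$ are eigenforms with eigenvalue $0$ while $h$ has eigenvalue $-4$, so that self-adjointness of the Hecke operator forces orthogonality to $U^+_{3/2}(4\cdot 7^2,\chi_{28})$. Your extra remarks (reading off the eigenvalue of $h$ from its theta-series/Eisenstein description and checking Hecke-stability of the span of $g_1,f_1$) only make explicit what the paper leaves implicit.
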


\begin{proof} 
To compute the orthogonal complement of $\langle h \rangle$, we compute that $g_1,f_1$ and $h$ are eigenfunctions of the Hecke operator $T(3^2)$. Since the first one have eigenvalue $0$, but the latter one has eigenvalue $-4$, they are orthogonal. This proves the claim.
\end{proof}

We now apply Proposition \ref{Ueda-proposition} to further get:

\begin{corollary}
    The two functions $g_1$ and $f_1$ have the same system of Hecke eigenvalues as $F$.
\end{corollary}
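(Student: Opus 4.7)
The plan is to read off the Hecke-module structure of $V_{3/2}^+(4\cdot 7^2,\chi_{28})$ directly from Ueda's decomposition in Proposition~\ref{Ueda-proposition}, and to show that it is a direct sum of copies of the Hecke eigenline $\C\cdot F$.

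First I would specialize Proposition~\ref{Ueda-proposition} at $p=7$. Since $7\equiv 3\pmod 4$, one has $\left(\frac{-1}{7}\right)=-1$, so the summands with coefficient $1+\left(\frac{-1}{7}\right)$ vanish while those with coefficient $1-\left(\frac{-1}{7}\right)$ are doubled. Moreover, $X_0(1)$ and $X_0(7)$ both have genus zero, so $S_2(1)^{new}=S_2(7)^{new}=0$, and the decomposition collapses to
\begin{align*}
    V_{3/2}^+(4\cdot 7^2,\chi_{28})\cong 2\bigl(S_2(49)^{+,-}\oplus S_2(49)^{-,-}\bigr),
\end{align*}
where the $2$ denotes multiplicity. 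By the same vanishings, $S_2(49)^{\perp}$ is the entire newspace $S_2(49)^{new}$, which is one-dimensional (since $X_0(49)$ has genus one) and spanned by $F$.

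Hence the right-hand side has total dimension at most $2$, and must equal $2$ in order to match the dimension of $V_{3/2}^+(4\cdot 7^2,\chi_{28})$ computed in Lemma~\ref{V_{3/2}(196)}. Consequently exactly one of $S_2(49)^{+,-}$ or $S_2(49)^{-,-}$ is spanned by $F$ and the other is zero. Since Ueda's isomorphism is Hecke-equivariant, $V_{3/2}^+(4\cdot 7^2,\chi_{28})$ is isomorphic as a Hecke module to two copies of $\C\cdot F$; in particular every nonzero element of it is a Hecke eigenform with the same system of eigenvalues as $F$. Specializing this to the basis $\{g_1,f_1\}$ gives the corollary.

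The only step that feels like a real risk is a dimension mismatch coming from a misreading of a sign or a multiplicity in Ueda's formula. As a safeguard I would cross-check against the $T(3^2)$-eigenvalue computation already performed inside the proof of Lemma~\ref{V_{3/2}(196)}: by the CM relation~(\ref{Frobenius_CM_-1}) one has $a_3(F)=0$, because $\left(\frac{-7}{3}\right)=-1$, which agrees with the eigenvalue $0$ found there for $g_1$ and $f_1$. This also pins down which of $S_2(49)^{+,-}$ or $S_2(49)^{-,-}$ actually contains $F$, although that refinement is not needed for the statement.
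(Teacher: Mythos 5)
Your argument is correct and follows the paper's overall route: specialize Ueda's decomposition (Proposition \ref{Ueda-proposition}) at $p=7$, use $\left(\frac{-1}{7}\right)=-1$ together with $S_2(1)^{new}=S_2(7)^{new}=0$ to collapse the right-hand side to $2\bigl(S_2(49)^{+,-}\oplus S_2(49)^{-,-}\bigr)$, and invoke the Hecke-equivariance of the isomorphism. Where you genuinely diverge is in handling the $(\epsilon,\epsilon')$-eigenspaces: the paper computes $F|W_{49}=-F$ and $F\otimes\rho_7=F$ explicitly, concluding $S_2(49)^{-,-}=S_2(49)$ and that the other three spaces vanish, whereas you avoid any Fricke or twist computation by a dimension count --- since $S_2(49)^{+,-}\oplus S_2(49)^{-,-}\subseteq S_2(49)^{\perp}=\C\cdot F$ is at most one-dimensional while $\dim V^+_{3/2}(4\cdot 7^2,\chi_{28})=2$ by Lemma \ref{V_{3/2}(196)}, exactly one summand equals $\C\cdot F$. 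This is a legitimate shortcut: either way $V^+_{3/2}$ is Hecke-isomorphic to two copies of the eigenline $\C\cdot F$, which is all the corollary needs; what you give up is only the explicit identification $F\in S_2(49)^{-,-}$, which the paper records but does not use later. One small caveat on your closing aside: the agreement of the $T(3^2)$-eigenvalue $0$ with $a_3(F)=0$ is a fine consistency check, but it does not pin down which of $S_2(49)^{+,-}$ or $S_2(49)^{-,-}$ contains $F$ --- that requires the $W_{49}$- and twist-eigenvalues as in the paper's proof; since you note this refinement is not needed, it does not affect the validity of your argument.
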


\begin{proof}
From Lemma \ref{V_{3/2}(196)}, we know that the left hand side in Proposition \ref{Ueda-proposition}, the space $V^+_{3/2}(4 \cdot 49, \chi_{28})$ is equal to the span of $g_1$ and $f_1$. It remains to describe the right hand side. 

First note that $S_2(49)^{new}=S_2(49)$ is a one dimensional subspace spanned by $F$. We compute that $\f|W_{49}=-\f$ and $\f\otimes\rho_p=\f$. Moreover, $S_2(7)^{new}$ and $S_2(1)^{new}$ are trivial. Thus $F \in S_2(49)^{-,-}$, i.e. $S_2(49)^{-,-}=S_2(49)$, and the other three subspaces are trivial: $S_2(49)^{+,+}=S_2(49)^{+,-}=S_2(49)^{-,+}=\{0\}$.
Since $\left(\frac{-1}{7}\right)=-1$, the right hand side of the isomorphism in Proposition \ref{Ueda-proposition} becomes
\begin{align*}
    V^+_{3/2}\left(4\cdot7^2, \chi_{28}\right) \cong 2 S_2(49)^{-,-}=2 S_2(49).
\end{align*}
\end{proof}

However since $g_1$ and $f_1$ have relatively many zero coefficients (as they are in the Kohnen space), applying Waldspurger's Theorem to them only allows us to cover about half of the twists. Thus we need to continue our search for suitable functions. To do so, we will use the Shimura lift in the following section.

\subsection{Shimura Lifts}

The Shimura lift maps half-integral functions to integer weight functions, preserving their Hecke eigenvalues.

\begin{theorem}[Shimura]\label{Shimura}
    Let $k\geq 1$ be an odd integer.
    Let $f= \sum c_n q^n \in M_{k+1/2}(4N,\chi)$ be a half-integral weight modular form. Let $t$ be a squarefree integer. Let
    \begin{align*}
        \chi_t(d)=\chi(d) \left( \frac{(-1)^{k}t}{d}\right)
    \end{align*}
    be a character modulo $4Nt$. Set 
    \begin{align*}
        \Sh_t(f):=\sum_{n=0}^{\infty} \left( \sum_{d | n} \chi_t(d)d^{k-1} c_{t (n/d)^2} \right) q^n.
    \end{align*}
    Then $\Sh_t(f) \in M_{2k}(2N, \chi^2)$.
    Moreover, if $p \nmid 4tN$, then 
    \begin{align*}
        \Sh_t(T(p^2)f)=T(p)\Sh_t(f).
    \end{align*}
\end{theorem}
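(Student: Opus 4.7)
The plan is to follow Shimura's original strategy: construct the Dirichlet series of the candidate lift, verify its functional equation via the metaplectic transformation law of $f$, then invoke Weil's converse theorem to deduce modularity; the Hecke equivariance is addressed separately by a direct computation on Fourier coefficients. By construction the Dirichlet series of $\Sh_t(f)$ factors as
\begin{equation*}
L(\Sh_t(f), s) = L(s-k+1, \chi_t) \cdot D_t(s), \qquad D_t(s) := \sum_{n=1}^{\infty} \frac{c_{tn^2}}{n^s},
\end{equation*}
so the substantive task is the analytic continuation and functional equation of the Shimura series $D_t(s)$.

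First I would realize $D_t(s)$ as a Rankin--Selberg integral, pairing $f$ against Shimura's weight-$1/2$ theta series $\theta_{\chi_t}(z) = \sum_m \chi_t(m) q^{m^2}$ with a real analytic Eisenstein series as a spectral weight. Using the metaplectic transformation law of $f$ on $\Gamma_0(4N)$ together with the corresponding law for $\theta_{\chi_t}$, an unfolding computation gives the analytic continuation of $D_t(s)$ and a functional equation relating $s$ to $2k - s$. Repeating the argument with Dirichlet twists $\psi$ of $f$, where $\psi$ has conductor coprime to $4Nt$, produces the family of functional equations for $L(\Sh_t(f), s) \otimes \psi$ with the conductors and root numbers expected for a cusp form of weight $2k$ on $\Gamma_0(2N)$ with character $\chi^2$. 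Weil's converse theorem then forces $\Sh_t(f) \in M_{2k}(2N, \chi^2)$, with the cuspidality inherited from that of $f$.

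For the Hecke equivariance, I would argue directly on coefficients. Shimura's explicit formula for $T(p^2)$ on a form of weight $k+1/2$ expresses the $m$th coefficient of $T(p^2) f$ as a combination of $c_{mp^2}$, the middle term $\chi(p) \left( \frac{(-1)^k m}{p} \right) p^{k-1} c_m$, and $\chi(p)^2 p^{2k-1} c_{m/p^2}$ (the last term present only if $p^2 \mid m$). Substituting $m = t(n/d)^2$ for each divisor $d \mid n$ and summing against $\chi_t(d) d^{k-1}$, then reorganizing via the identity $\chi_t(p)^2 = 1$ and the factorization $n = p^a n'$ with $\gcd(n', p) = 1$, recovers exactly the $n$th coefficient of $T(p) \Sh_t(f)$ as predicted by the classical Hecke recursion. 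This is a finite bookkeeping exercise once the explicit formula for $T(p^2)$ is granted.

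The main obstacle is the functional equation step: tracking the metaplectic multiplier of $f$ through the Rankin--Selberg unfolding against the weight-$1/2$ theta, and checking that the resulting gamma factors, conductors and root numbers line up precisely with those of a weight-$2k$ form of level $2N$ and character $\chi^2$. Everything else, namely the converse-theorem input and the Hecke coefficient matching, is essentially formal once this delicate computation is in hand.
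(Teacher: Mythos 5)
The paper does not prove this statement at all: it is quoted as background (Shimura's correspondence, with the level refinement due to Niwa), so there is no internal proof to compare against; your sketch has to stand on its own. In outline it correctly reproduces Shimura's original strategy — the factorization $L(\Sh_t(f),s)=L(s-k+1,\chi_t)D_t(s)$, the Rankin--Selberg realization of $D_t(s)$ by pairing $f$ against the weight-$1/2$ theta series with an Eisenstein spectral weight, Weil's converse theorem for modularity, and a direct coefficient computation for the Hecke equivariance (which is indeed routine given the explicit formula for $T(p^2)$).

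The genuine gap is the step where you say the converse theorem ``forces $\Sh_t(f)\in M_{2k}(2N,\chi^2)$.'' Weil's converse theorem requires functional equations for twists by sufficiently many primitive characters of conductor coprime to the claimed level, with exactly the right conductors and root numbers, and the Rankin--Selberg unfolding against $\theta_{\chi_t}$ does not produce this full family with the precision needed to pin the level down to $2N$. This is not a technicality you can defer: Shimura, executing precisely the strategy you describe, obtained modularity only at some unspecified level and conjectured $2N$; the level $2N$ asserted in the statement was proved by Niwa by an entirely different mechanism, namely realizing the lift as an integral of $f$ against a Siegel--Weil theta kernel (later refined by Cipra). So as written your plan proves a weaker statement (modularity at some level with character $\chi^2$), not the one at hand. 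A smaller inaccuracy: the aside that ``cuspidality is inherited from $f$'' is false in general for $k=1$ — the Shimura lift of a weight-$3/2$ cusp form lying in the span of unary theta series can be an Eisenstein series — though since the statement only claims membership in $M_{2k}(2N,\chi^2)$ this does not affect what you need to prove.
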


Before using it as a tool to find more suitable functions, we apply it to the functions found in the previous section.

\paragraph{Shimura-Lift of $g_1$ and $f_1$.}
We study $\Sh_1(g_1), \Sh_3(f_1) \in S_2(98)$. Note that $\Sh_1(f_1)=\Sh_2(f_1)=0$. Neither $g_1$ nor $f_1$ gets lifted to $F$:
\begin{align*}
    \Sh_1(g_1)&= q - 2q^4 - 2q^8 - 3q^9 + 4q^{11} + 2q^{16} + 8q^{23} - 5q^{25} + 2q^{29} + O(q^{30})\\
    \Sh_3(f_1)&=-2q^2 - 2q^4 + 2q^8 + 6q^{16} + 6q^{18} - 8q^{22} + O(q^{30}). 
\end{align*}

However, they get mapped to a linear combination of $\f$ and an old form arising from $\f$: We set $F_{old}:=\sum_{n=0}^{\infty}a_{2n}(\f)q^n$. Its  Fourier extension is:
\begin{align*}
    F_{old}=&q-q^2-3q^4-q^8-3q^9+4q^{11}+5q^{16}+ 3q^{18}-4q^{22}+8q^{23}\\
    &-5q^{25}+2q^{29}+O(q^{30}).
\end{align*}
As proved in equation (\ref{old-same-heigenvalues}), this form has the same system of Hecke eigenvalues as $F$. 

We have that $\Sh_1(g_1)=(1/2)(\f+F_{old})$ and $\Sh_3(f_1)=F_{old}-\f$. By linearity of the Hecke operators, this shows that indeed $\Sh_1(g_1)$ and $\Sh_3(f_1)$ have the same system of Hecke eigenvalues as $\f$.

We use this, to search for more half-integral weight forms satisfying Waldspurger's Theorem. We first computed the basis of spaces of half-integral weight modular forms corresponding to different character with modulus $4\cdot 49$, applied the Shimura lift to these functions and checked if any linear combinations of $F$ and $F_{old}$ could be found in their span.
\begin{lemma}
    The functions
    \begin{align*}
     f_2&=q-q^2-q^4-3q^8+q^9+2q^{15}-3q^{16}+q^{18}-2q^{22}+2q^{23}+q^{25} \\
    &+4q^{29}+2q^{30}-q^{32}-q^{36}+4q^{37}+2q^{39}+O(q^{43}) \text{ and }\\
    f_3&=q^3-2q^5-q^6+3q^{12}+2q^{13}+q^{17}-q^{19}-2q^{20}-3q^{24}+2q^{26}\\
    &-2q^{31}-2q^{33}
    +q^{34}- q^{38}+q^{41}+O(q^{43})
\end{align*}
have the same system of Hecke eigenvalues as $F$ for almost all primes.
\end{lemma}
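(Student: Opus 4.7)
The strategy is to use the Shimura lift (Theorem \ref{Shimura}) to transport the eigenvalue question from the weight $3/2$ space down to $S_2(98)$, where the only relevant Hecke eigenvalues are those of $F$ and $F_{old}$, and then lift the conclusion back. This mimics exactly what was done for $g_1$ and $f_1$ in the preceding subsection.

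First, I would compute $\Sh_t(f_2)$ and $\Sh_t(f_3)$ for a handful of small squarefree $t$ (for instance $t=1,2,3,5,6$) via the explicit series in Theorem \ref{Shimura}. This only requires the Fourier coefficients of $f_2$ and $f_3$ that are displayed above, together with the Sturm bound for $S_2(98)$ to check equalities in the target space. Second, I would verify by coefficient comparison that each nonzero $\Sh_t(f_i)$ lies in $\mathrm{span}(F, F_{old}) \subset S_2(98)$, exactly as happened for $\Sh_1(g_1)$ and $\Sh_3(f_1)$. Since $F$ has $T(p)$-eigenvalue $a_p(F)$ for every prime $p$, and by (\ref{old-same-heigenvalues}) the same holds for $F_{old}$ for every prime $p \nmid 2\cdot 49$, every element of $\mathrm{span}(F, F_{old})$ is an eigenform for $T(p)$ with eigenvalue $a_p(F)$ for almost all $p$. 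Applying the intertwining relation of Theorem \ref{Shimura},
\begin{align*}
\Sh_t\bigl(T(p^2) f_i - a_p(F)\, f_i\bigr) = T(p)\,\Sh_t(f_i) - a_p(F)\,\Sh_t(f_i) = 0,
\end{align*}
valid for all primes $p$ with $p \nmid 4\cdot 49 \cdot t$.

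To descend this vanishing back to $f_i$, I would pick one $t$ for which the corresponding coefficient $c_t(f_i)$ is nonzero (inspecting the $q$-expansions of $f_2$ and $f_3$ shows such $t$ exist) and argue injectivity of $\Sh_t$ on the Hecke-stable subspace of $S_{3/2}(4\cdot 7^2, \chi_{28})$ generated by $f_i$. Equivalently, one may decompose this small Hecke-stable subspace into simultaneous eigenspaces for the $T(p^2)$ and observe that any eigenform whose Shimura lift is a nonzero element of $\mathrm{span}(F, F_{old})$ must share $F$'s eigenvalue $a_p(F)$ for almost every $p$; then $f_2$ and $f_3$ are forced to be such eigenforms because their lifts are nonzero for some $t$.

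The main obstacle is precisely this last descent step: the Shimura correspondence is not in general injective, so one has to either produce a single $t$ whose Shimura lift is injective on the relevant finite-dimensional subspace, or use several $t$'s in tandem to conclude the vanishing of the coefficients $c_{tn^2}(T(p^2)f_i - a_p(F)f_i)$ for every squarefree $t$ and every $n$ — which, since every positive integer factors uniquely as $tn^2$, forces $T(p^2)f_i = a_p(F)\,f_i$. The qualifier \emph{almost all primes} in the statement absorbs the finitely many bad primes dividing $4\cdot 49$ (and the auxiliary $t$'s) for which the intertwining relation of Theorem \ref{Shimura} is not guaranteed.
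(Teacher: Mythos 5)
Your overall strategy (Shimura lifts, the intertwining relation, and the fact that $F$ and $F_{old}$ carry the same Hecke eigenvalues away from $2\cdot 7$) is exactly the paper's, but the proof is not complete: the step you yourself flag as ``the main obstacle'' --- descending from $\Sh_t\bigl(T(p^2)f_i-a_p(F)f_i\bigr)=0$ back to $T(p^2)f_i=a_p(F)f_i$ --- is left open, and neither of the two routes you sketch closes it as stated. A single auxiliary $t$ cannot work uniformly, since $\Sh_3(f_2)=0$ and $\Sh_1(f_3)=0$, and even for one fixed $f_i$ the ``injectivity on the Hecke-stable subspace generated by $f_i$'' or the eigenspace-decomposition variant begs the question: a priori $f_i$ could have an eigencomponent with a different eigenvalue system lying in $\ker\Sh_t$ for that particular $t$, which is precisely what must be excluded, and you have not computed the subspace on which injectivity would have to be verified. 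The alternative of using ``several $t$'s in tandem'' to kill all coefficients $c_{tn^2}$ would require knowing that $\Sh_t(f_i)$ is annihilated by $T(p)-a_p(F)$ for \emph{every} squarefree $t$, an infinite family of conditions that cannot be settled by computing lifts for a handful of small $t$.

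The paper closes this gap with one finite computation on the full $9$-dimensional space: $\ker(\Sh_1)\cap\ker(\Sh_3)=\{0\}$ inside $S_{3/2}(4\cdot 7^2,\chi_{28})$, together with $\Sh_1(f_2)=F_{old}$, $\Sh_3(f_2)=0$, $\Sh_1(f_3)=0$, $\Sh_3(f_3)=2F-F_{old}$. Then for $p\neq 2,7$ the element $T(p^2)f_2-a_p(F)f_2$ is sent by $\Sh_1$ to $(T(p)-a_p(F))F_{old}=0$ and by $\Sh_3$ to $0$, so it lies in the trivial joint kernel and vanishes; the symmetric argument (with the roles of $\Sh_1$ and $\Sh_3$ exchanged) handles $f_3$. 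Note that this works because $T(p^2)f_i-a_p(F)f_i$ automatically lies in the ambient space $S_{3/2}(4\cdot 7^2,\chi_{28})$, so no Hecke-stable subspace ever needs to be identified; if you add the joint-kernel computation (a routine check on the basis $h_1,\dots,h_9$ using the Sturm bound) your argument becomes the paper's proof.
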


\begin{proof}
We have that $\Sh_1(f_2)=F_{old}$ and $\Sh_3(f_3)=2\f-F_{old}$. So it only remains to show that these functions are indeed eigenforms for almost all primes. For any squarefree $t$, the Shimura lift is a linear function mapping
\begin{align*}
    \Sh_t: S_{3/2}\left( 4\cdot49, \chi_{28} \right) \rightarrow M_2\left( 2 \cdot 49, \chi_{249} \right).
\end{align*} 
One can compute that $\ker(\Sh_1) \cap \ker(\Sh_3)=\{0\}$ and $\Sh_3(f_2)=0$. Hence for primes $p\neq 2,7$:
$$a_p(F_2)f_2-T(p)f_2 \in \ker(\Sh_1) \cap \ker(\Sh_3).$$ Thus $a_p(F)f_2=T(p)f_2$, i.e. $f_2$ is an eigenform for almost all primes with the same system of eigenvalues as $F$. The same reasoning applies to $f_3$, since $\Sh_1(f_3)=0$.
\end{proof}

The functions $f_1,f_2,f_3$ and $g_1$ are linearly independent.
\begin{remark}
    Computations with the code of Purkait \cite{CodePurkait} confirm that the subspace of $S_{3/2}\left( 4\cdot 49, \chi_{28}\right)$ consisting of the functions with the same system of eigenvalues as $F$ is spanned by these functions.
\end{remark}

\subsection{Theta Functions}
To describe a criterion for when the coefficients of the functions found in the previous sections are zero, we express them in terms of theta functions. 
We use the Theorem 10.9 in \cite{Iwaniec}:
\begin{theorem}\label{theo-matrices-theta-function}
    Let $A$ be a positve definite $r\times r$ matrix with integer entries and even diagonal entries. Let $Q(v):=(1/2)v^TAv$ be the associated quadratic form and set $r_Q(n)=\#\{v \in \Z^r : \quad Q(v)=n\}$.  Define:
    \begin{align*}
        \theta_Q(z)=\sum_{n=0}^{\infty} r_Q(n)q^n.
    \end{align*}
Let $N$ be the smallest positive integer such that $NA^{-1}$ has integer entries and even diagonal entries. Then
\begin{align*}
    \theta_Q(z) \in M_{r/2}(\Gamma_0(N),\chi_{|2A|}).
\end{align*}
\end{theorem}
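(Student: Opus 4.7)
The proof plan is to establish the modular transformation behavior of $\theta_Q$ under the generators of $\Gamma_0(N)$, with holomorphy and growth at the cusps following from the absolute convergence of the theta series and the positive-definiteness of $Q$.

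First I would observe that translation invariance $\theta_Q(z+1)=\theta_Q(z)$ is immediate, since $r_Q(n)\in\Z_{\geq 0}$ and the series is indexed by integers $n$. This handles the generator $T$ of $\Gamma_0(N)$.

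The core of the argument is Poisson summation applied to the Gaussian-type function $v\mapsto \exp(\pi i z\, v^T A v)$. Writing
\begin{align*}
    \theta_Q(z) \;=\; \sum_{v \in \Z^r} e^{\pi i z\, v^T A v},
\end{align*}
one uses the multivariate identity that the Fourier transform of $v\mapsto \exp(-\pi t\, v^T A v)$ (for $t>0$) equals $t^{-r/2}(\det A)^{-1/2}\exp(-\pi t^{-1} w^T A^{-1} w)$, then analytically continues from $z=it$ to the upper half-plane. Applying this after rescaling by $N$ yields a transformation formula of the shape
\begin{align*}
    \theta_Q\!\left(-\tfrac{1}{Nz}\right) \;=\; \gamma\cdot (-i N z)^{r/2}\cdot \sum_{w\in \Z^r} e^{\pi i z\, w^T(NA^{-1})w},
\end{align*}
where $\gamma$ is an explicit constant involving $(\det A)^{-1/2}$. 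The minimality hypothesis on $N$ is what makes the right-hand side itself a holomorphic theta series with integer $q$-expansion: because $NA^{-1}$ has integer entries with even diagonal, the form $Q^*(w):=\tfrac12 w^T(NA^{-1})w$ takes integer values on $\Z^r$, so the dual sum is $\theta_{Q^*}(z)$.

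Third, I would combine translation invariance with the Fricke-type identity above to verify the automorphy law $\theta_Q\bigl(\tfrac{az+b}{cz+d}\bigr)=\chi_{|2A|}(d)(cz+d)^{r/2}\theta_Q(z)$ for every $\begin{pmatrix}a&b\\c&d\end{pmatrix}\in\Gamma_0(N)$. The character $\chi_{|2A|}=\bigl(\tfrac{|2A|}{\cdot}\bigr)$ arises naturally from the square root of $\det(2A)$ in the Poisson computation, together with a quadratic Gauss-sum evaluation when one conjugates the Fricke involution by translations to reach a general element of $\Gamma_0(N)$. Holomorphy in the interior is clear, and boundedness at cusps follows from the fact that $r_Q(n)$ grows polynomially in $n$ by positive-definiteness, so $\theta_Q\in M_{r/2}(\Gamma_0(N),\chi_{|2A|})$ rather than merely a weakly holomorphic form.

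The main obstacle is the careful bookkeeping of the multiplier system when $r$ is odd, i.e.\ in the half-integral weight case relevant to the present paper. Then $(cz+d)^{r/2}$ must be interpreted via the standard theta multiplier, and one has to verify that the quadratic character produced by Poisson summation matches $\chi_{|2A|}$ on $\Gamma_0(N)$ after accounting for the $\bigl(\tfrac{c}{d}\bigr)$-type factor in the Shimura automorphy factor; this is where the parity of the diagonal entries of $A$ and $NA^{-1}$, and the factor $|2A|$ inside the Kronecker symbol, enter in an essential way. A secondary technical point is the sign and branch of $(-iNz)^{r/2}$, which must be tracked consistently through the analytic continuation from the positive imaginary axis.
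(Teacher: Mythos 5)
The paper does not actually prove this statement: it is quoted as Theorem 10.9 of Iwaniec's book, so your sketch has to be measured against the classical proof given there. Your general direction --- integrality of $Q$ giving invariance under $z\mapsto z+1$, Poisson summation giving the behaviour under $z\mapsto -1/(Nz)$, Gauss sums producing the character, and the theta multiplier in the odd-$r$ (half-integral weight) case --- is the right family of ideas, and your Fricke-type identity is stated correctly.

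However, your Step 3 contains a genuine gap. The translation $T\colon z\mapsto z+1$ and the Fricke map $W_N\colon z\mapsto -1/(Nz)$, even together with all conjugates of $W_N$ by translations, do not generate a group containing $\Gamma_0(N)$: conjugating by $z\mapsto \sqrt{N}\,z$ turns $\langle T,W_N\rangle$ into the Hecke-type group generated by $z\mapsto z+\sqrt{N}$ and $z\mapsto -1/z$, which for $N\geq 5$ is discrete of infinite covolume and therefore cannot contain the lattice $\Gamma_0(N)$ (in the application in this paper $N=196$). So automorphy under all of $\Gamma_0(N)$ cannot be ``verified by combining'' translation invariance with the Fricke identity; knowing the transformation under $T$ and $W_N$ says nothing about a general $\gamma=\begin{pmatrix}a&b\\c&d\end{pmatrix}\in\Gamma_0(N)$. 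The actual proof (Iwaniec, and Shimura for odd $r$) establishes the transformation law for an arbitrary such $\gamma$ directly: one splits the sum over $v\in\Z^r$ into residue classes modulo $c$, applies Poisson summation to each class, and evaluates the resulting multivariate quadratic Gauss sums; it is exactly there that $N\mid c$, the evenness of $A$ and of $NA^{-1}$, and the determinant $|2A|$ enter, producing the character $\chi_{|2A|}$ and, for odd $r$, the Shimura theta multiplier. The Gauss-sum evaluation you mention in passing is thus the core of the theorem, not bookkeeping attached to a generation argument that cannot work. A secondary point: polynomial growth of $r_Q(n)$ only gives holomorphy at the cusp $\infty$; holomorphy at the other cusps of $\Gamma_0(N)$ must be read off from the transformation of $\theta_Q$ under arbitrary elements of $SL_2(\Z)$, which again requires the general transformation formula rather than the two special ones you use.
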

 We have the following result:

\begin{lemma}\label{theta-matrices}
We can write:
\begin{align*}
    f_1&=\frac{-1}{2}\theta_1+\frac{1}{2}\theta_2,\\
    f_2&=\frac{1}{2}\theta_4+\frac{-1}{2}\theta_7,\\
    f_3&=\frac{1}{2}\theta_{12}+\frac{-1}{2}\theta_{13}  \quad \text{and} \\
    g_1&=\frac{-1}{4}\theta_3+
    \frac{1}{4}\theta_4+
    \frac{1}{4}\theta_5+
    \frac{1}{4}\theta_6+
    \frac{1}{4}\theta_7-
    \theta_8-
    \frac{1}{2}\theta_9+
    \frac{1}{4}\theta_{10}+
    \frac{1}{2}\theta_{11}.\\
\end{align*}
where $\theta_i$ is the theta function associated to the matrix $M_i$.
\begin{align*}
    &M_1:=\begin{pmatrix}
        26 & 8 & -8 \\
        8 & 24 & 4 \\
        -8 & 4 & 24
    \end{pmatrix}, \quad
    &M_2:=\begin{pmatrix}
        34 & 2 & 6 \\
        2 & 10 & 2 \\
        6 & 2 & 34
    \end{pmatrix}, \quad
    &M_3:=\begin{pmatrix}
        34 & 2 & -6 \\
        2 & 4 & 2 \\
        -6 & 2 & 22
    \end{pmatrix}, \\
    &M_4:=\begin{pmatrix}
        56 & 0 & 14 \\
        0 & 2 & 0 \\
        14 & 0 & 28
    \end{pmatrix},\quad
    &M_5:=\begin{pmatrix}
        70 & 14 & -28 \\
        14 & 42 & 14 \\
        -28 & 14 & 70
    \end{pmatrix}, \quad
    &M_6:=\begin{pmatrix}
        98 & 0 & 0 \\
        0 & 2 & 0 \\
        0 & 0 & 14
    \end{pmatrix}, \\
    &M_7:=\begin{pmatrix}
        98 & 0 & 0 \\
        0 & 4 & 2 \\
        0 & 2 & 8
    \end{pmatrix}, \quad
    &M_8:=\begin{pmatrix}
        98 & 0 & 0 \\
        0 & 8 & 4 \\
        0 & 4 & 16
    \end{pmatrix}, \quad
    &M_9:=\begin{pmatrix}
        98 & 0 & 0 \\
        0 & 14 & 7 \\
        0 & 7 & 28
    \end{pmatrix},\quad \\
    &M_{10}:=\begin{pmatrix}
        98 & 0 & 0 \\
        0 & 14 & 0 \\
        0 & 0 & 98
    \end{pmatrix}, \quad
    &M_{11}:=\begin{pmatrix}
        98 & 0 & 0 \\
        0 & 28 & 14 \\
        0 & 14 & 56
    \end{pmatrix}, \quad
    &M_{12}:=\begin{pmatrix}
        24 & 2 & 10\\
        2 & 6 & 2\\
        10 & 2 & 24
    \end{pmatrix}\\
    &M_{13}:=\begin{pmatrix}
        34 & 2 &-2\\
        2 &10 & 4\\
        -2 & 4 & 10
    \end{pmatrix}.
\end{align*}

\end{lemma}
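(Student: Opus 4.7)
The plan is to reduce the claimed identities to a finite computation of Fourier coefficients via the Sturm bound. For each matrix $M_i$, I first invoke Theorem \ref{theo-matrices-theta-function} to place $\theta_i$ in a space $M_{3/2}(\Gamma_0(N_i), \chi_{|2M_i|})$. The verification consists of three routine checks: $M_i$ is positive definite with even diagonal (visible by inspection or a Sylvester/Cholesky calculation); the determinant $|2M_i|$ produces the Kronecker symbol character; and the smallest $N_i$ such that $N_i M_i^{-1}$ is integral with even diagonal divides $196$. Once done, each $\theta_i$ sits in the ambient space $M_{3/2}(\Gamma_0(196), \chi_{28})$ that also contains $f_1, f_2, f_3, g_1$.

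Next, I would compute the Fourier expansion of each $\theta_i$ up through $q^{42}$ by enumerating integer vectors $v \in \Z^3$ with $(1/2)v^T M_i v \leq 42$; since each $M_i$ is positive definite, only finitely many $v$ contribute, and the enumeration is a bounded lattice-point count. With these expansions in hand, the four claimed identities become linear equalities of truncated $q$-series. Checking them is then pure arithmetic: substitute the stated rational coefficients into the right-hand sides and match coefficient by coefficient against the Fourier expansions of $f_1, f_2, f_3, g_1$ recorded earlier in the paper.

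To promote coefficient-wise agreement up to $q^{42}$ into genuine equality in $M_{3/2}(\Gamma_0(196), \chi_{28})$, I would invoke the Sturm bound for half-integral weight modular forms (Appendix \ref{Sturm}), exactly as was used to characterize $S_{3/2}(4\cdot 49,\chi_{28})$ earlier in the paper. Note that although $f_1, f_2, f_3, g_1$ are cusp forms while the $\theta_i$ are only modular forms, this is not an obstacle: the identity is asserted inside the larger space $M_{3/2}$, where the Sturm bound still applies, and any Eisenstein contribution from the $\theta_i$ must cancel on the right-hand side precisely because the left-hand side is cuspidal.

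The main obstacle is bookkeeping rather than conceptual: computing $13$ inverse matrices, determining the precise level $N_i$ in each case (some $M_i$ have off-diagonal entries that make $M_i^{-1}$ nontrivial to invert by hand), and verifying the character $\chi_{|2M_i|}$ restricts correctly to $\chi_{28}$ modulo $196$. These verifications are naturally delegated to MAGMA, as was done throughout the previous section. Once these levels and characters are pinned down, the remaining Sturm-bound comparison is entirely mechanical.
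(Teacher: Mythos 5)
Your proposal is correct and matches the paper's (largely implicit) argument: the paper likewise verifies the hypotheses of Theorem \ref{theo-matrices-theta-function} to place each $\theta_i$ in $M_{3/2}(\Gamma_0(196),\chi_{28})$, computes the coefficients by a bounded lattice-point enumeration (using positive definiteness to bound the vectors), and concludes equality from agreement up to the Sturm bound $42$. Your extra remark about Eisenstein contributions is unnecessary but harmless, since the Sturm-bound comparison already forces the difference to vanish identically.
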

We note that the theta series $\theta_1,...,\theta_{13}$ are linearly independent.

\medskip To compute the theta series of a given matrix, we used the fact that for any symmetric, positive definite $n \times n$ matrix $A$, we have  $v^TAv \geq \lambda_{min} |v|^2$ where $\lambda_{min}=\min \{ \text{Eigenvalues } A \}$, which allowed us to bound the norm (and thus the entries) of the vector we need to consider.
Thus given matrices it is easy to check that they span indeed the functions $f_1,f_2,f_3$ and $g_1$. In the Appendix \ref{Computing Theta-Functions}, we describe how we found suitable matrices. 

\section{Proof of the main Theorem}
In this section we prove Theorem \ref{Main-Theorem}. The proof consists of $6$ steps.
\begin{proof}
\textbf{Step 1:  Reducing the problem to studying L-functions of modular forms.}
Fix a discriminant $d$.
We know that $E_1(\Q(\sqrt{d}))$ has non-torsion points if and only if $E_d(\Q)$ has non-torsion points (see also Lemma \ref{torsion-correspondance}). By the Birch and Swinnerton-Dyer Conjecture (\ref{BSD}) $E_d(\Q)$ has non-torsion points if and only if  $L(E_d,1)=0$. Let $F$ be the modular form associated to $E_1$. Then the modular form associated to $E_d$ is given by $F_d:=F \otimes \chi_d$ (see also equation \ref{eq: modular form of E_d}) and $L(F_d,1)=L(E_d,1)$. In the remainder of the proof, we will therefore study $L(F_d,1)$.

 \textbf{Step 2: Waldspurger's Theorem.} The main tool to determine when $L(F_d,1)$ vanishes is Waldspurger's Theorem (\ref{Waldspurger}). We apply this in our setting: Recall that $F$ is an eigenform and that $f_1,f_2$ and $f_3$ have the same system of Hecke eigenvalues. For the moment assume that $d_1$ and $d_2$ are two square-free positive integers such that $d_1/d_2$ is a non-zero square in $\Q_2$ and $\Q_7$ (we give more details on how to choose $d_1$ and $d_2$ in step 4). For $j \in \{1,2\}$, we now describe the character $\chi_0^{-1}\chi_{d_j}$ from Waldspurger's Theorem. Recall that $\chi_0(d_j):=\chi(d_j) \left( \frac{-1}{d_j}\right)$ where $\chi=\chi_{28}$ is the character of the half-integer weight form. So we get $\chi_0^{-1}\chi_{d_j}=\chi_{-28d_j}$. For $ i \in \{1,2,3\}$ Waldspurger's Theorem implies that:
\begin{align}\label{eq: Waldspurger-for-X0(49)-1}
        c_{d_1}(f_i)^2L(\f\otimes \chi_{-28d_2},1) \chi(d_2/d_1)d_2^{1/2} = c_{d_2}(f_i)^2L(\f\otimes \chi_{-28d_1},1) d_1^{1/2}.
\end{align}

\textbf{Step 3: Rewriting $L( F\otimes \chi_{-28d_j}, 1)$.}
In this step for $j=1,2$ we show that $L( F\otimes \chi_{-28d_j}, 1)$ is essentially $L( F\otimes \chi_{d_j}, 1)$. First note that
\begin{align}\label{eq: L(F_-7d4)}
    L\left(\f\otimes \chi_{-28d_j},1\right)
    =L\left(\f\otimes \chi_{-7} \otimes \chi_{d_j} \otimes \chi_4,1\right).
\end{align}
We claim that $F\otimes \chi_{-7}=\f$. Indeed it suffices to show that whenever $\left( \frac{-7}{d} \right)\neq 1$, then $a_d=0$. Recall, that for primes $p$ the coefficients $F$ are given by the trace of the Frobenius of the elliptic curve $E_1$. By equation (\ref{Frobenius_CM_-1}) for any prime $p \geq 5$ of good reduction, $a_p=0$ if and only if $\left( \frac{-7}{p} \right)=-1$.
    Moreover:
\begin{align*}
    \left( \frac{-7}{2} \right)=1 &\text{ and } a_2=1,\quad
    \left( \frac{-7}{3} \right)=-1 &\text{ and } a_3=0, \text{ and }
    \left( \frac{-7}{7} \right)=0 &\text{ and } a_7=0.
\end{align*}
Thus
\begin{align}\label{eq: F_-7d=F_d}
    \f_{-7d}=\f_d
\end{align}
 and therefore from equation \ref{eq: L(F_-7d4)} it follows that
\begin{align*}
    L\left(\f\otimes \chi_{-28d_j},1\right)
    =L\left(F\otimes \chi_{d_j} \otimes \chi_{4},1\right).
\end{align*}
 By Proposition \ref{prop-twist-L-function} we know that $\f_{d_j}= F\otimes \left(\frac{d_j}{\cdot}\right)$ is an eigenform with character $\chi_{49{d_j}^2}$ and we apply Corollary \ref{L-function-p-twist-1} with $p=2$ to get:
 \begin{align*}
    L\left(\f\otimes \chi_{-28d_j},1\right)
    &=L\left(\f_{d_j},1\right)
    \left(1-\left(a_2(\f_{d_j})-\chi_{49d_j^2}(2)\right)2^{-1}\right).
    \end{align*}
Finally noting that $a_2(\f)=1$ we compute:
\begin{align*}
     C(d_j):&=1-\left(a_2(\f_{d_j})-\chi_{49n^2}(2)\right)2^{-1}
    =1-\left(a_2(\f)\left( \frac{d_j}{2}\right)-\left( \frac{d_j}{2}\right)^2\right)2^{-1}\\
    &=1-\left(\left( \frac{d_j}{2}\right)-\left( \frac{d_j}{2}\right)^2\right)2^{-1}
   =\begin{cases}
        1 & \text{ if }\left( \frac{d_j}{2}\right) \in \{ 0,1\}\\[1.5mm]
        2 &\text{ if }\left( \frac{d_j}{2}\right) =-1
    \end{cases}
\end{align*}
Therefore
 \begin{align*}
    L\left(\f\otimes \chi_{-28d_j},1\right)
    =L\left(\f_{d_j},1\right)C(d_j).
\end{align*}
Thus equation \ref{eq: Waldspurger-for-X0(49)-1} becomes:
\begin{align}\label{eq: Waldspurger-for-X0(49)-2}
        c_{d_1}(f_i)^2C(d_2)L(\f\otimes \chi_{d_2},1) \chi(d_2/d_1)d_2^{1/2} = c_{d_2}(f_i)^2L(\f\otimes \chi_{d_1},1) d_1^{1/2}C(d_1).
\end{align}

\textbf{Step 4: Finding suitable choices for $d_2$ and $f_i$.}
 Set $d_1:=d$ is the discriminant in whose twist we are interested in. For each choice of $d_1$, we want to find a suitable discriminant $d_2$ and $i \in \{1,2,3\}$ such that:
\begin{itemize}
    \item $d_2$ is square-free and $d_1/d_2$ is a non-zero square in $\Q_2$ and $\Q_7$.
    \item $c_{d_2}(f_i)\neq 0$.
    \item $L(F_{d_2},1)\neq 0$
\end{itemize}

 We recall some properties of squares in $p$-adic fields:
    \begin{itemize}
        \item If $d_1=2^{m_1}u_1$ and $d_2=2^{m_2}u_2$ where $u_i$ is a unit in $\Q_2$ then $d_1/d_2 \in (\Q_2^*)^2$ if and only if $m_1\equiv m_2 \mod 2$ and $u_1\equiv u_2 \mod 8$.
        \item For any odd prime $p$, let $d_1=p^{m_1}u_1$ and $d_2=p^{m_2}u_2$ where $u_i$ is a unit in $\Q_p$. Then $d_1/d_2 \in (\Q_p^*)^2$ if and only if $m_1\equiv m_2 \mod 2$ and $u_1\equiv s u_2$ where $s$ is a square modulo $p$.
    \end{itemize}
Since we assume that $(d,7)=1$, we only need to distinguish the case $(d,2)=1$ and $(d,2)=2$. If $d_1:=d$ is odd, then any $d_2$ with $d_2 \equiv d_1 \mod 8$ and $\left( \frac{d_1}{7}\right)=\left( \frac{d_2}{7}\right)$ is suitable. If $d_1$ is even, then we need that $d_1/2 \equiv d_2/2 \mod 8$ and $\left( \frac{d_1}{7}\right)=\left( \frac{d_2}{7}\right)$. 

Computations with MAGMA yield the following:
\medskip
\begin{itemize}
    \item[(i)]
Let $\left(\frac{d_1}{7} \right)=1$. For odd numbers we have:

\begin{center}
    \begin{tabular}{c c c c}
    $d_1 \mod 8$ & $d_2$  &$c_{d_2}(f_2)$& $L\left(\f_{d_2}, 1\right)$\\
    \hline
    1& 1& 1 & 0.9666\\
    3& 51&  -2& 1.0828\\
    5& 29&  4& 0.7180\\
    7& 15&  2& 1.9967\\
    \end{tabular}
\end{center}
For the last column we compute the $L$-functions values $L(F_d,1)$ up to $30$ decimals in MAGMA.
Similarly for even numbers:
\begin{center}
    \begin{tabular}{c c c c}
    $d_1/2 \mod 8$ & $d_2$  &$c_{d_2}(f_2)$& $L\left(\f_{d_2}, 1\right)$\\
    \hline
    1& 2& -1 & 1.3670\\
    3& 22&  -2& 1.6487\\
    5& 58&  -2& 1.0154\\
    7& 30&  2& 1.4118\\
    \end{tabular}
\end{center}

\item[(ii)]
Let $\left( \frac{d_1}{7}\right)=-1$ and $d_1\not\equiv 1 \mod 4$. 
First assume that $d_1$ is odd, then we have:

    \begin{center}
    \begin{tabular}{c c c c}
    $d_1 \mod 8$ & $d_2$  &$c_{d_2}(f_3)$& $L\left(\f_{d_2}, 1\right)$\\
    \hline
    3& 3& 1 & 2.2323\\
    7& 31& -2 & 2.777\\
    \end{tabular}
    \end{center}

Now assume that $d_1$ is even, then

    \begin{center}
    \begin{tabular}{c c c c}
    $d_1/2 \mod 8$ & $d_2$  &$c_{d_2}(f_3)$& $L\left(\f_{d_2}, 1\right)$\\
    \hline
    1& 34& 1& 0.6631\\
    3& 6& -1& 1.5785\\
    5& 26& 2& 3.0332\\
    7& 94& 2& 1.5952\\
    \end{tabular}
    \end{center}

\item[(iii)]
If $\left( \frac{d_1}{7}\right)=-1$ and $d_1\equiv1\mod 4$, then
\begin{center}
    \begin{tabular}{c c c c c c c c c}
    $d_1 \mod 8$ & $d_2$ &    $c_{d_2}(f_1)$ & $L\left(\f_{d_2}, 1\right)\approx$\\
    \hline
    1& 17&  -1 & 0.4688\\
    5& 5&  1&  0.8646 \\
    \end{tabular}
    \end{center}
\end{itemize}

\textbf{Step 5: Rearranging the equation \ref{eq: Waldspurger-for-X0(49)-2} from Waldspurger's Theorem.}
For $d_1:=d$, let $d_2$ and $f_i$ be as in step 4. Note that the choice of $d_2$ forced that $(d_2,7)=1$ and that $(d_1,2)=(d_2,2)$, so $\chi_{28}(d_2/d_1) \neq 0$. Therefore, we may rearrange the equation \ref{eq: Waldspurger-for-X0(49)-2} and get:
\begin{align*}
    L&\left(\f\otimes \chi_{d_1},1\right)\\
    &=(c_{d_1}(f_i)/c_{d_2}(f_i))^2 L\left(\f\otimes \chi_{d_2},1\right) \chi_{28}(d_2/d_1)(d_2/d_1)^{1/2}(C(d_2)/C(d_1))
\end{align*}
By our assumptions from the previous step, the right hand side is zero if and only if $c_{d_1}(f_i)=0$.

\textbf{Step 6: Rewriting the coefficients of $f_i$.}
Recall that from Lemma \ref{theta-matrices}
\begin{align*}
    f_1=\frac{-1}{2}\theta_1+\frac{1}{2}\theta_2, \quad 
    f_2=\frac{1}{2}\theta_4+\frac{-1}{2}\theta_7,\text{ and }
    f_3=\frac{1}{2}\theta_{12}+\frac{-1}{2}\theta_{13}.
\end{align*}
Where for any matrix
\begin{align*}
    M_{i}:=\begin{pmatrix}
        a & b & c \\
        b & d & e \\
        c & e & f
    \end{pmatrix}
    \end{align*}
 as in the statement of the lemma the corresponding theta series is given by:
    \begin{align*}    \theta_{Q_i}(z)=\sum_{n=0}^{\infty} \#\{(x,y,z) \in \Z^3 : \quad (a/2)x^2+bxy+cxz+(d/2) y^2 +e yz + (f/2)z^2=n\} q^n.
\end{align*}
Writing this out with the matrices given in \ref{theta-matrices} yields the conditions in the statement. 
\end{proof}

\begin{remark}
    If $\left( \frac{d_1}{7}\right)=-1$ and $d_1 \equiv 1 \mod 8$, then since also $c_{17}(f_3)=-1$, we can also apply the third criterion of Theorem \ref{Main-Theorem}.

    It is possible to apply Theorem \ref{Main-Theorem} to the function $g_1$, however we did not find a good decomposition of $g_1$ in terms of theta series.
\end{remark}
In the theorem we assume that the discriminant $d$ is positive and coprime to $7$. Moreover in step $3$ of the proof (see equation \ref{eq: F_-7d=F_d}) we show that for any discriminant $d$ we have $F_{-7d}=F_d$. Therefore $E_1$ has non-torsion points over $Q(\sqrt{-7d})$ if and only if $E_1$ has non-torsion points over $Q(\sqrt{d})$. Thus it remains to study the rank of $E_d$ when $d<0$ is not divisible by $7$. We show that in this case $E_d$ has always non-torsion points:
\begin{lemma}
    Let $d<0$ be coprime to $7$. Then assuming the BSD conjecture (\ref{BSD}) $E(\Q(\sqrt{d}))$ is of positive rank.
\end{lemma}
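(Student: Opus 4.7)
The plan is to mimic Step~1 of the proof of Theorem~\ref{Main-Theorem}, reducing the question to the vanishing of $L(F_d,1)$, and then to read off this vanishing directly from the sign of the twisted functional equation; no half-integral weight machinery is needed.

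First I would invoke Lemma~\ref{torsion-correspondance} to replace the statement ``$E_1(\Q(\sqrt{d}))$ has positive rank'' by ``$E_d(\Q)$ has positive rank.'' Under the BSD conjecture (\ref{BSD}) this is equivalent to $L(F_d,1)=0$, where $F$ is the newform associated to $E_1$ and $F_d = F\otimes\chi_d$ is its quadratic twist.

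Next I would determine the sign of the functional equation for $F_d$. For $F$ itself, the numerical value $L(F,1)\approx 0.9666\neq 0$ forces the sign $\epsilon$ to be $+1$, since in weight $2$ a sign of $-1$ would force vanishing at the central point. Applying equation (\ref{functional-equation-l-function}) to the twist, the sign of $\Lambda(F_d,s)$ equals
\begin{align*}
    \left(\frac{d}{-49}\right)\epsilon = \left(\frac{d}{-1}\right)\left(\frac{d}{49}\right) = \operatorname{sgn}(d)\cdot\left(\frac{d}{7}\right)^{2},
\end{align*}
which equals $-1$ under our hypotheses $d<0$ and $(d,7)=1$.

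With sign $-1$, the functional equation $\Lambda(F_d,1)=-\Lambda(F_d,1)$ forces $\Lambda(F_d,1)=0$ and hence $L(F_d,1)=0$; invoking (\ref{BSD}) one last time gives positive rank for $E_d(\Q)$, and therefore for $E_1(\Q(\sqrt{d}))$. There is no real obstacle in this argument; the only point requiring care is that the Kronecker symbol convention $\left(\frac{d}{-1}\right)=\operatorname{sgn}(d)$ is applied consistently with the conventions underlying (\ref{functional-equation-l-function}), so that $d<0$ indeed reverses the sign.
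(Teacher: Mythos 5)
Your proposal is correct and follows essentially the same route as the paper: reduce via Lemma \ref{torsion-correspondance} and BSD to the vanishing of $L(F_d,1)$, then force $\Lambda(F_d,1)=0$ from the sign $\left(\frac{d}{-49}\right)\epsilon=-1$ of the twisted functional equation (\ref{functional-equation-l-function}) when $d<0$ and $(d,7)=1$. The only cosmetic difference is that you deduce $\epsilon=+1$ for $F$ from the numerical nonvanishing $L(F,1)\approx 0.9666$, whereas the paper reads it off from the Fricke eigenvalue $w_{49}=-1$; both are valid.
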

\begin{proof}
Following step $1$ of the proof above, we study the $L$-function of $F\otimes \chi_d$. Recall (see equation (\ref{functional-equation-l-function})) that $\Lambda(F\otimes \chi_d,s)$, the completion of the $L$-function of $F \otimes \chi_d$, satisfies the following functional equation:
\begin{align*}
    \Lambda(F \otimes \chi_d,1)=-w_{49} \chi_d(-49) \Lambda(F \otimes \chi_d, 1)
\end{align*}
where $w_{49}=-1$ is the eigenvalue of $F$ under the Fricke involution. Since:
\begin{align*}
    \chi_d(-49)=\left( \frac{d}{-49}\right)=\left( \frac{d}{-1}\right)=-1.
\end{align*}
we have that $\Lambda(F\otimes \chi_D, k)=0$ and hence $L(F \otimes \chi_D,k)=0$.
\end{proof}
With this lemma we covered all possibilities for the discriminant $d$ and are therefore able to determine the rank of $E_1$ over any quadratic extension $Q(\sqrt{d})$.

\subsection*{Acknowledgements}
The author would like to thank Eugenia Rosu for suggesting this project and for many helpful discussions. 
\appendix
\section{Torsion of the Twists of $X_0(49)$}
Below we compute the torsion of group of the quadratic twists $E_d(\Q)$

\begin{lemma}\label{torsion-E_d(Q)}
    The torsion group of $\E_d(\Q)$ is isomorphic to $\Z/2\Z$ and is generated by $(d7,0)$
\end{lemma}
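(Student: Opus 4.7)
The plan is to prove the lemma in two stages: first exhibit the 2-torsion point and show it exhausts all rational 2-torsion, then rule out any higher torsion by reduction modulo well-chosen primes.

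For stage one, I would pass to the short Weierstrass model $Y^2 = X^3 - 35 d^2 X - 98 d^3$ of $\E_d$, obtained from $dy^2 = x^3 - 35x - 98$ via $(x,y) = (X/d, Y/d^2)$. Factoring the right-hand cubic as $(X - 7d)(X^2 + 7dX + 14 d^2)$ and observing that the quadratic factor has discriminant $-7 d^2$, a non-square in $\Q$, shows that $\E_d(\Q)[2] = \{O, (7d, 0)\} \cong \Z/2\Z$.

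For stage two, I would use the standard injection $\E_d(\Q)_{\mathrm{tors}} \hookrightarrow \E_d(\F_p)$ at any prime $p$ of good reduction, together with the quadratic-twist formula $|\E_d(\F_p)| = p + 1 - \chi_d(p)\, a_p(\E_1)$. The discriminant of the above model is $-2^{12} \cdot 7^3 \cdot d^6$, so the primes of bad reduction are exactly those dividing $14 d$. By the CM identity~(\ref{Frobenius_CM_-1}), $a_p(\E_1) = 0$ whenever $p$ is inert in $\Q(\sqrt{-7})$, and hence for such inert $p \nmid 14d$ one has $|\E_d(\F_p)| = p + 1$ uniformly in $d$. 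It then suffices to produce, for each admissible $d$, two inert primes $p_1, p_2$ coprime to $14 d$ with $\gcd(p_1+1, p_2+1) = 2$; this forces $|\E_d(\Q)_{\mathrm{tors}}|$ to divide $2$, and combined with stage one yields $\E_d(\Q)_{\mathrm{tors}} = \langle (7d, 0) \rangle \cong \Z/2\Z$.

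The main obstacle is this last uniformity step: no fixed pair of primes works for every $d$. I would begin with the small inert primes $3, 5, 13, 17, 19, 31, \ldots$ (giving $p + 1 = 4, 6, 14, 18, 20, 32, \ldots$), among which many pairwise gcds equal $2$ (e.g.\ $\gcd(4,6) = \gcd(4,14) = \gcd(6,14) = 2$), and perform a short case analysis on which of these small primes divide $d$. If this elementary selection proves insufficient for some $d$, I would invoke Chebotarev's density theorem applied to $\Q(\sqrt{-7}, i)$ and to $\Q(\sqrt{-7}, \zeta_\ell)$ for each odd prime $\ell$ to produce inert primes with prescribed residue modulo $4$ or modulo $\ell$, which handle potential $4$-torsion and odd $\ell$-torsion respectively while avoiding any prescribed finite set of forbidden primes dividing $d$.
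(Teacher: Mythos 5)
Your argument is correct and rests on the same engine as the paper's: reduce modulo odd primes of good reduction that are inert in $\Q(\sqrt{-7})$, where the CM identity forces $\#\E_d(\F_p)=p+1$, and use Dirichlet/Chebotarev to supply such primes avoiding the divisors of $14d$. The differences are in the packaging. For the $2$-torsion you factor $X^3-35d^2X-98d^3=(X-7d)(X^2+7dX+14d^2)$ and note the quadratic factor has non-square discriminant $-7d^2$, which is self-contained; the paper instead descends to $(x/d,0)$ on $E_1$ and quotes the already-computed $E_1(\Q)_{tors}\cong\Z/2\Z$. For the rest, the paper kills each prime $p_0>3$ with the progression $p=3+7p_0n$ and kills $4$ with $p=5+28n$, but it cannot treat $3$ this way (the progression $3+21n$ is not coprime to its modulus), so it rules out $3$-torsion by an explicit duplication-formula computation; your scheme handles $3$ and $4$ uniformly by reduction, e.g.\ via an inert prime $\equiv 1 \bmod 4$ and $\equiv 1 \bmod 3$ such as those $\equiv 73 \bmod 84$, which is arguably cleaner. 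Two small points to tighten: the model $Y^2=X^3-35d^2X-98d^3$ has bad reduction \emph{contained in} (not exactly equal to) the primes dividing $14d$, which is all you need; and in the Chebotarev fallback you should say explicitly that only finitely many $\ell$ occur (those dividing $p_1+1$ for one fixed auxiliary inert prime $p_1\nmid 14d$, which already bounds the torsion order), and that the congruence conditions are compatible even for $\ell=7$ by taking $p\equiv 3$ or $5 \bmod 7$; with that said, your selection of pairs with $\gcd(p_1+1,p_2+1)=2$ closes the proof.
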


\begin{proof}
    We first show that we have two-torsion. Let $P=(x,y)$ be a two torsion point satisfying the equation $y^2=x^3-35d^2x-98d^3$. Then $y=0$ and $(x/d,0)$ satisfies $dy^2=x^3-35x-98$. Thus $(x/d,0)$ is a two torsion point on $\E(\Q)$. We already showed that the only value for $x/d=7$, hence $P=(d7,0)$.

    Now assume that $\#\E_d(\Q)_{tors}=N$ and that there exists a prime $p_0>3$ dividing $N$. For any prime $p$ such that $E_1$  and $\E_d$ have good reduction mod $p$ we have that $a_p(E_d)=\left( \frac{d}{p}\right) a_p(E_1)$. In particular we have that $\left( \frac{-7}{p}\right)=-1$ if and only if $\#\E_d(\F_p)=p+1$.

    We use this to find a contradiction: According to Dirichlet's Theorem on arithmetic progressions, there exist infinitely many primes $p$ of the form $p= 3 + 7p_0n$ for some $n \in \N$. Pick $p$ such that $\E_1$ and $\E_d$ have good reduction mod $p$. Using quadratic reciprocity, we get:
    \begin{align*}
        \left( \frac{-7}{p}\right)
        =\left( \frac{p}{-7}\right)
        =\left( \frac{3}{7}\right)=-1.
    \end{align*}
    Thus $a_p(\E_d)=a_p(\E_1)=0$ and we get we get:
    \begin{align*}
    p_0 \mid \#\E_d(\F_p)=p+1=4+7p_0n.
    \end{align*}
    Hence $4 \equiv 0 \mod p_0$. A contradiction.

    Similarly one can also show that $4$ does not divide $N$, by looking at primes of the form $p=5+4\cdot7n$.

    If $3$ divides $N$ then we have a three torsion point $P=(x,y)$. Thus $2P=-P=(x,-y)$. The $x$-coordinate of $2P$ is given by
\begin{align*}
    x=\left(\frac{3x^2-35d^2}{2y}\right)^2-2x \quad \Rightarrow \quad 2^23xy^2=(3x^2-35d^2)^2
\end{align*}
Since the right hand side is a square, $x$ has to be of the form $3x_0$ where $x_0$ is a square.
Moreover  we see that $d^4\equiv 0 \mod 3$, so $d=3d_0$, where $(d_0,3)=1$ since $d$ is squarefree. Plugging this into the equation, we get:
\begin{align*}
    2^23^2x_0y^2=(3^3x_0^2-3^235d_0^2)^2=3^4(3x_0^2-35d_0^2)^2\\
    \Rightarrow 2^2x_0y^2=3^2(3x_0^2-35d_0^2)^2
\end{align*}
The left hand side becomes:
\begin{align*}
    2^2x_0y^2
    &=2^2x_0(3^3x_0^3-3^3 35d_0^2x_0-3^3 98d_0^3)\\
    &=3^32^2x_0(x_0^3-35d_0^2x_0- 98d_0^3).
\end{align*}
And thus
\begin{align*}
    3^32^2x_0(x_0^3-35d_0^2x_0- 98d_0^3)&=3^2(3x_0^2-35d_0^2)^2\\
    \Rightarrow 3\cdot2^2x_0(x_0^3-35d_0^2x_0- 98d_0^3)&=(3x_0^2-35d_0^2)^2.
\end{align*}
Then $ d_0^4\equiv 0 \mod 3$, contradicting $(d_0,3)=1$.
Hence $3$ does not divide $N$.
\end{proof}
The next Lemma proves that instead of studying the elliptic curve over a given quadratic extension, we can study its twist.
\begin{lemma}\label{torsion-correspondance}
    We have have a map $ \tau : \E_1(\Q\sqrt{d}) \rightarrow \E_d(\Q)$ such that for any point $P \in \E_1(\Q\sqrt{d})$ we have: $P \in \E(\Q\sqrt{d})_{tors} \Leftrightarrow \tau (P) \in \E_d(\Q)_{tors}$.
\end{lemma}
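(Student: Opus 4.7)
The plan is to construct $\tau$ as the standard ``anti-trace'' map arising from the quadratic twist isomorphism over $\Q(\sqrt{d})$. First I would replace $E_1$ by its short Weierstrass form $Y^2 = X^3 - 35X - 98$, which is $\Q$-isomorphic to the minimal model given at the start of the paper (via the change of variables $y \mapsto y - x/2$ followed by $x \mapsto x + 1/4$ and the admissible scaling $(x,y) \mapsto (x/4, y/8)$). Working in this model makes the twist relation transparent: the map
\[
\phi : E_d \longrightarrow E_1, \qquad (x,y) \longmapsto (x,\, y\sqrt{d})
\]
is an isomorphism defined over $\Q(\sqrt{d})$, since $(y\sqrt{d})^2 = dy^2 = x^3 - 35x - 98$.

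Next, let $\sigma$ denote the non-trivial element of $\mathrm{Gal}(\Q(\sqrt{d})/\Q)$ and define
\[
\tau(P) := \phi^{-1}\bigl(P - \sigma(P)\bigr), \qquad P \in E_1(\Q(\sqrt{d})).
\]
I would verify that $\tau$ is well-defined: write $P - \sigma(P) = (X,Y)$; applying $\sigma$ to the relation $P - \sigma(P) = -\bigl(\sigma(P) - P\bigr)$ forces $\sigma(X) = X$ and $\sigma(Y) = -Y$, so $X \in \Q$ and $Y = y\sqrt{d}$ for some $y \in \Q$. Substituting back shows $(X,y) \in E_d(\Q)$, so $\tau(P) \in E_d(\Q)$. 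Since $\phi^{-1}$ is a group isomorphism over $\Q(\sqrt{d})$ and $P \mapsto P - \sigma(P)$ is $\Z$-linear, $\tau$ is a group homomorphism.

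It remains to prove the torsion equivalence. The forward direction is immediate: if $P$ has order $n$, then so does $\sigma(P)$, hence $P - \sigma(P)$ has order dividing $n$, and $\tau(P) = \phi^{-1}(P - \sigma(P))$ is torsion. For the converse I would use the already-established fact (\ref{eq: E over Q}) that $E_1(\Q) \cong \Z/2\Z$ is entirely torsion. If $\tau(P)$ is torsion, then $P - \sigma(P) = \phi(\tau(P))$ is torsion in $E_1(\Q(\sqrt{d}))$; on the other hand $P + \sigma(P)$ is $\sigma$-invariant, so it lies in $E_1(\Q)$ and is therefore torsion as well. Adding the two gives that $2P$ is torsion, and hence so is $P$.

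The argument is essentially formal once the twist isomorphism $\phi$ is in place. The one point that is not purely formal, and is the only place the specific curve $E_1$ enters, is the converse direction, where one needs the hypothesis that $E_1(\Q)$ has rank zero in order to conclude that $P + \sigma(P)$ is torsion; without this input the lemma would only yield the weaker statement $\mathrm{rank}\, E_1(\Q(\sqrt{d})) = \mathrm{rank}\, E_1(\Q) + \mathrm{rank}\, E_d(\Q)$ coming from the Weil-restriction decomposition. Since rank zero of $E_1(\Q)$ has already been established unconditionally from $L(E_1,1) \neq 0$ together with the Gross--Zagier--Kolyvagin theorem, there is no real obstacle.
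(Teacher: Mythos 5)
Your proof is correct, and while your map $\tau$ is exactly the paper's map $P \mapsto P - \sigma(P)$ read on the twist, your verification of the torsion equivalence takes a genuinely different route. You make $\tau$ a group homomorphism from the start, by writing it as $\phi^{-1}\circ(\mathrm{id}-\sigma)$ with $\phi\colon E_d \to E_1$, $(x,y)\mapsto(x,y\sqrt{d})$ the twist isomorphism over $\Q(\sqrt{d})$ (and you are in fact more careful than the paper in passing explicitly to the $\Q$-isomorphic model $y^2=x^3-35x-98$, which is needed for negation to be $(X,Y)\mapsto(X,-Y)$). This renders the forward direction purely formal, and for the converse you use only that $P+\sigma(P)$ is $\sigma$-invariant, hence lies in $E_1(\Q)\cong\Z/2\Z$ and is torsion, so $2P=(P-\sigma(P))+(P+\sigma(P))$ is torsion. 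The paper instead never establishes the homomorphism property (it is only remarked upon afterwards): its converse relies on the explicit description of $E_d(\Q)_{\mathrm{tors}}$ from Lemma \ref{torsion-E_d(Q)} (the point $(7d,0)$, giving $2Q=0$ and then $4P=0$), and its forward direction invokes the decomposition of odd torsion from \cite{Gonz_lez_Jim_nez_2013} together with explicit duplication-formula computations to track $\tau$ under multiplication by $2$. Your argument is shorter, avoids both Lemma \ref{torsion-E_d(Q)} and the external reference, and, as you observe, the only curve-specific input is that $E_1(\Q)$ has rank zero (unconditional here via $L(E_1,1)\neq 0$ and Gross--Zagier--Kolyvagin), so it generalizes to any elliptic curve with rank-zero Mordell--Weil group over $\Q$; the paper's computation-heavy route buys only that it never has to check that $\tau$ respects the group law. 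One cosmetic caveat: state explicitly that $\tau$ sends the identity cases ($P-\sigma(P)=O$) to $O\in E_d(\Q)$, which your argument already covers uniformly.
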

\begin{proof}
    Let
    \begin{align*}
        \sigma: \Q(\sqrt{d}) \rightarrow \Q(\sqrt{d}),\quad a+b\sqrt{d} \mapsto a-b\sqrt{d}.
    \end{align*}
    be the non-trivial field automorphism on $\Q(\sqrt{D})$. We can extend this map to $\E(\Q(\sqrt{d}))$ by mapping a point $P=(x,y)$ to $\sigma(P)=(\sigma(x), \sigma(y))$. Since $\sigma$ is a multiplicative and additive map, this map is well-defined. After extending it to the point at infinity by $0 \mapsto 0$, this map becomes a group homomorphism.

    Let $P$ be a point on $\E_1(\Q(\sqrt{d}))$. Set $Q:=P-\sigma(P)$. Then $\sigma(Q)=-Q$. Thus $Q$ is of the form $(a,b\sqrt{d})$ where $a$ and $b$ satisfy $dy^2=x^3-35x-98$. The map $\tau$ maps the point $P$ to $(a,b)$. 

    $\Leftarrow$ Let $P_1 \in \E(\Q(\sqrt{d})) \setminus \{0\}$ such that $Q=P-\sigma(P)=(a,b\sqrt{d})$ corresponds to a torsion point in $\E_d(\Q)$. If $Q=0$, then $P=\sigma(P)$, hence $P \in \E(\Q)$ and thus $P=(7,0)$ is a torsion point (see also equation \ref{eq: E over Q}). Otherwise, we have that $(a,b)=(d7,0)$. Thus $2Q=0$, hence $2P=2\sigma(P)=\sigma(2P)$, which implies that $2 P \in \E(\Q)$. Then either $2P=0$ or $2P=(7,0)$. In any case, $4P=0$, thus $P$ is a torsion point.

    $\Rightarrow $ We now show that any torsion point $P$ gets mapped to a torsion point in $\E_d(\Q)$.
    By Corollary 4 in \cite{Gonz_lez_Jim_nez_2013}, for odd $n$ we have that
    \begin{align*}
     \E(\Q(\sqrt{d}))[n] \simeq \E_d(\Q) [n] \times \E(\Q)[n].
    \end{align*}
    We already showed that the right hand side is trivial, so we have no torsion points of odd order in $\E(\Q(\sqrt{d}))$. Thus we may assume that $P \in \E(\Q(\sqrt{d}))[2^n] \setminus \{0\}$ for some smallest $n \in \N$. Then since $2^nQ=2^nP-\sigma(2^nP)=0$, $Q$ is also a torsion point. Now write $Q=(a,b  \sqrt{d})$. This point gets mapped to $(a,b)$ on $E_d: dy^2=x^3-25x-98$ which corresponds to $\tilde{Q}=(da,d^2b)$ on $\E_d: y^2=x^3-35d^2x-98d^3$. For the $x$-coordinates of $2Q$ and $2 \tilde{Q}$, we have:
    \begin{align*}
        x(2Q)&=\left( \frac{3a^2-35}{2b \sqrt{d}} \right)^2-2a, \quad \text{and}\\
        x(2\Tilde{Q})&=\left( \frac{3d^2a^2-35d^2}{2bd^2} \right)^2-2ad=\left( \frac{3a^2-35}{2b} \right)^2-2ad =d\cdot a(2Q).
    \end{align*}

For the $y$-coordinates, we get:
\begin{align*}
    y(2Q)&=-\frac{(3a^2-35)}{2b\sqrt{d}}\left( \left( \frac{3a^2-35}{2b \sqrt{d}} \right)^2-2a \right) -\frac{a^3-35a-2\cdot98}{2b\sqrt{d}} \quad \text{and}\\
    y(2\Tilde{Q})&=-\frac{(3d^2a^2-35d^2)}{2bd^2}\left(\left( \frac{3a^2-35}{2b} \right)^2-2ad\right)-\frac{a^3d^3-35ad^3-2\cdot98d^3}{2bd^2} \\
    &=-\frac{(3a^2-35)}{2b}\left(\left( \frac{3a^2-35}{2b} \right)^2-2ad\right)-\frac{a^3d-35ad-2\cdot98d}{2b}\\
   & =d^{3/2}y(2Q).
\end{align*}

From these computations it is clear that $2Q$ is again of the form $(\alpha,\beta\sqrt{d})$ with $\alpha, \beta \in \Q$ and that moreover, $(d \alpha,d^2\beta)=2\Tilde{Q}$, i.e. the map $\tau$ behaves nicely under the two multiplication by $2$ map. Hence if $Q=(a,y\sqrt{d})$ is a $2$-torsion point, then so is $\Tilde{Q}$. This shows that torsion points get mapped to torsion points.
\end{proof}
This proof uses very basic techniques. We point out that the map $\tau$ is generalizable for any elliptic curve and that also in the general case, $E(\Q(\sqrt{d}))_{tors}$ gets mapped to $E_d(\Q)_{tors}$ with kernel $E(\Q)_{tors}$. One can show that $\tau$ is a group homomorphism using that $E$ and $E_d$ are isomorphic over $\Q(\sqrt{d})$ and that $E_d(\Q) \subset E_d(\Q\sqrt{d})$.

\section{Variations of Sturm's Bound}\label{Sturm}
Sturm \cite{Sturm} proved that to determine a modular form, it suffices to compute a finite number of coefficients, depending on its weight and level. Similar results were proven by Kumar and Purkait  \cite{Purkait-Kumar-2014} for half-integral weight forms as well as for the Kohnen subspace. Note that they work with a slightly different definition of the Kohnen plus space than Definition \ref{Def-kohnen-space}. Their proof can be easily adapted.

\begin{itemize}
    \item Let $F =\sum_{n=0}^{\infty} a_n q^n \in M_k\left(N,\chi\right)$ be a modular form of integer weight. If $a_n=0$ for all $n \leq \frac{k}{12}[SL_2(\Z) : \Gamma_0(N)]$, then $F=0$.
    \item Let $f=\sum_{n =0}^{\infty} c_nq^n \in M_{k/2}(N, \chi)$. If $c_n=0$ when $n \leq \frac{k}{24}[SL_2(\mathbb{Z}):\Gamma_0(N)]$, then $f=0$.
    \item Let $f=\sum_{n=1}^{\infty} c_n q^n \in S_{k/2}(4N,\chi)$. Assume that $\chi=\left( \frac{N_0}{\cdot}\right)$ for some $N_0|N$ and set $\epsilon:=\left(\frac{-1}{N_0}\right)$. If $c_n=0$ for all $n \equiv 2, (-1)^{(k+1)/2} \epsilon \quad (\text{mod 4})$ with $n \leq \frac{k}{24}\cdot[SL_2(\mathbb{Z}):\Gamma_0(4N)]$, then $c_n=0$ for all $n \equiv 2, (-1)^{(k+1)/2} \epsilon \quad (\text{mod 4})$, i.e. $f \in S^K_{k/2}(4N, \chi)$.
\end{itemize}

\section{Computing Theta-Functions}\label{Computing Theta-Functions}
We describe how to compute matrices satisfying the conditions of Theorem \ref{theo-matrices-theta-function}.
\paragraph{Diagonal Matrices} We start with the easiest case, i.e. we assume that $A$ is a diagonal matrix.
Let
\begin{align*}
A=\begin{pmatrix}
    a & 0 & 0 \\
    0 & d & 0 \\
    0 & 0 & f
\end{pmatrix}.    
\end{align*}
As $A$ has to be positive definite, we know that $a,d,f \in \N_{>0}.$ Let $a=a_1...a_{\alpha}$, $d=d_1...d_{\delta}$ and $f=f_1...f_{\phi}$ be the prime decomposition, with $a_i,d_i,f_i>0$. Then
\begin{align*}
    A^{-1}=\begin{pmatrix}
    \frac{1}{a_1...a_{\alpha}} & 0 & 0 \\
    0 & \frac{1}{d_1...d_{\delta}} & 0 \\
    0 & 0 & \frac{1}{f_1...f_{\phi}}
\end{pmatrix}. 
\end{align*}
We assume that $2^2\cdot 7^2 A^{-1}$ is a matrix with even coefficients, so we have that $a,d,f \in \{2,2\cdot 7, 2\cdot 7^2 \}$, as $a,d,f$ are also even. As $2^2\cdot 7^2$ is the smallest integer satisfying this condition, at least one of $a,d,f$ is equal to $2 \cdot 7^2$. Since we are interested in the quadratic forms associated to these matrices, we can assume without loss of generality that $a=2\cdot 7^2$. The determinant is of the form:
\begin{align*}
    \det A =adf=2\cdot7^2df=2 \cdot 7^2 \cdot 2^{e_1} 7^{e_2}.
\end{align*}
As the determinant is of the form $7\cdot 2 \cdot s^2$, so have that $e_1$ is even and $e_2$ is uneven. Thus
\begin{align*}
    d=2 &\Rightarrow f=2\cdot 7,\\
    d=2\cdot7 &\Rightarrow f\in\{ 2, 2\cdot 7^2\},\\
    d=2\cdot7^2 &\Rightarrow f = 2\cdot 7.\\
\end{align*}
As the quadratic forms generated from the matrices are symmetric with respect to the choice of $d$ and $f$, we get the following two matrices:
\begin{align*}
A_1=\begin{pmatrix}
    2 \cdot 7^2 & 0 & 0 \\
    0 & 2 \cdot 7^2 & 0 \\
    0 & 0 & 2 \cdot 7
\end{pmatrix}\quad \text{and }
A_2=\begin{pmatrix}
    2 \cdot 7^2 & 0 & 0 \\
    0 & 2 & 0 \\
    0 & 0 & 2 \cdot 7
\end{pmatrix}.
\end{align*}
These matrices give rise to the following quadratic forms:
\begin{align*}
    Q_{A_1}(x,y,z)&=7^2x^2+7^2y^2+7z^2,\\
    Q_{A_2}(x,y,z)&=7^2x^2+y^2+7z^2.
\end{align*}
The theorem implies that $\theta_{Q_{A_1}},\theta_{Q_{A_1}} \in M_{3/2}\left(2^2\cdot7^2, \left(\frac{7}{\cdot}\right)\right)$. Thus we need to compute the first $42$ coefficients, i.e.
\begin{align*}
    \# \{v=(x,y,z) \in \Z^3: Q_{A_i}(v)=n\}, \text{ for } n \leq 42.
\end{align*}
For any integer $x>0: 7^2x^2>49>42$. So we only need to look at $(x,y,z) \in \Z^3$ with $x=0$.
For $\theta_{Q_{A_1}}$, we can also assume that $y=0$:
\begin{align*}
    Q_{A_1}(0,0,0)&=0\\
    Q_{A_1}(0,0,1)&=7\\
    Q_{A_1}(0,0,-1)&=7\\
    Q_{A_1}(0,0,2)&=28\\
    Q_{A_1}(0,0,-2)&=28
\end{align*}
As $3^2\cdot7=63>42$, these coefficients will be enough to find the theta function attached to $A_1$.
\begin{align*}
    \theta_{A_1}=1+2q^7+2q^{28}+O(q^{43})
\end{align*}
For $A_2$, looking at $|y| \leq 7$ and $|z| \leq 3$, we get
\begin{align*}
\theta_{A_2}:=1&+2q+2q^4+2q^7+4q^8+2q^9+4q^{11}+6q^{16}+4q^{23}+2q^{25}+2q^{28}\\
&+4q^{29}+8q^{32}+2q^{36}+4q^{37}+O(q^{43}).
\end{align*}

Since no linear combination of $g_1,f_1,f_2$ and $f_3$ is in the span of these forms, we need to look at more general matrices.

\paragraph{The General Case.}  Essentially, we are doing the same as above, i.e. using the positive definiteness to find suitbale matrices and then computing the first $42$ coefficients. Since the computations become quickly long, we wrote an algorithm to check, if a matrix of the form
\begin{align*}
    A=\begin{pmatrix}
        a & b & c \\
        b & d & e \\
        c & e & f
    \end{pmatrix}
\end{align*}
satisfies the properties of Theorem \ref{theo-matrices-theta-function}. Note that by applying the vectors $(1,0,0), (0,1,0)$ and $(0,0,1)$, we can see that $a,d,f>0$.
The diagonal entries of the inverse matrix of $A$ are given by $|A|^{-1}(df-e^2)$, $|A|^{-1}(af-c^2)$ and $|A|^{-1}(ad-b^2)$. Since $A^{-1}$ is positive definite as well, this shows that $df-e^2>0$, $af-c^2>0$ and $ad-b^2>0$. These constraints are enough to write an algorithm looking for suitable matrices.

\begin{remark}
    We first followed the approach as in \cite{Tunnell}, aiming to find functions $h_1 \in S_{1/2}(N_1, \chi_1)$ and $h_2 \in S_{1}(N_2, \chi_2)$  such that $h_1h_2 \in S_{3/2}(4\cdot 7^2, \left( \frac{7}{\cdot}\right))$ is a (linear combination of) $g_1,f_1,f_2,f_3$. Computations with MAGMA showed that this is not possible.
\end{remark}


\begin{thebibliography}{10}

\bibitem{Diamond-Shurman}
{\sc F.~Diamond and J.~Shurman}, {\em {A First Course in Modular Forms}},
  Springer New York, NY, New York, 2005.

\bibitem{Ehlen_2020}
{\sc S.~Ehlen, P.~Guerzhoy, B.~Kane, and L.~Rolen}, {\em {Central L-values of
  Elliptic Curves and Local Polynomials}}, Proceedings of the London
  Mathematical Society, 120 (2020), pp.~742--769.

\bibitem{Gonz_lez_Jim_nez_2013}
{\sc E.~Gonzalez-Jim{\'{e}}nez and J.~M. Tornero}, {\em {Torsion of Rational
  Elliptic Curves over Quadratic Fields}}, Revista de la Real Academia de
  Ciencias Exactas, Fisicas y Naturales. Serie A. Matematicas, 108 (2013),
  pp.~923--934.

\bibitem{gross-zagier}
{\sc B.~Gross and D.~Zagier}, {\em Heegner points and derivatives of l-series},
  Inventiones Mathematicae, 84 (1986), pp.~225--320.

\bibitem{Iwaniec}
{\sc H.~Iwaniec}, {\em Topics in Classical Automorphic Forms}, American
  Mathematical Society, Providence, Rhode Island, 1997.

\bibitem{Jones_Rouse}
{\sc M.~Jones and J.~Rouse}, {\em {Solutions of the Cubic Fermat Equation in
  Quadratic Fields}}, International Journal of Number Theory, 09 (2013),
  p.~1579–1591.

\bibitem{kolyvagin}
{\sc V.~Kolyvagin}, {\em {Finiteness of $E(\mathbb{Q}$) and the
  Tate-Shafarevich group of $E(\mathbb{Q})$ for a subclass of Weil curves}},
  Izv. Akad. Nauk, USSR, ser. Matem., 52 (1988), pp.~522--540.

\bibitem{Purkait-Kumar-2014}
{\sc N.~Kumar and S.~Purkait}, {\em {A note on the Fourier coefficients of
  half-integral weight modular forms}}, Archiv der Mathematik, 102 (2015),
  pp.~369--378.

\bibitem{Ono}
{\sc K.~Ono}, {\em {The Web of Modularity : Arithmetic of the Coefficients of
  Modular Forms and q-Series}}, American Mathematical Society, Providence,
  Rhode Island, 2004.

\bibitem{Pacetti_2007}
{\sc A.~Pacetti and G.~Tornaría}, {\em {Examples of Shimura Correspondence for
  Level $p^2$ and Real Quadratic Twists}}, Cambridge University Press, Feb.
  2007, p.~289–314.

\bibitem{CodePurkait}
{\sc S.~Purkait}, {\em {Code for Computing Shimura's Decomposition }}.
\newblock \url{https://www.math.titech.ac.jp/~purkait/}.
\newblock Last accessed: 19.02.2024.

\bibitem{Purkait_2013}
{\sc S.~Purkait}, {\em {Explicit Application of Waldspurger's Theorem}}, {LMS}
  Journal of Computation and Mathematics, 16 (2013), pp.~216--245.

\bibitem{Sturm}
{\sc J.~Sturm}, {\em On the congruence of modular forms}, in Number Theory,
  D.~V. Chudnovsky, G.~V. Chudnovsky, H.~Cohn, and M.~B. Nathanson, eds.,
  Berlin, Heidelberg, 1987, Springer Berlin Heidelberg, pp.~275--280.

\bibitem{Tunnell}
{\sc J.~Tunnell}, {\em {A classical Diophantine Problem and modular Forms of
  weight 3/2}}, Inventiones mathematicae, 72 (1983), p.~323–334.

\bibitem{Ueda}
{\sc M.~Ueda}, {\em {The Decomposition of the Spaces of Cusp Forms of
  Half-integral Weight and Trace Formula of Hecke operators}}, Journal of
  Mathematics of Kyoto University, 28 (1988), pp.~505 -- 555.

\bibitem{Waldspurger}
{\sc J.-L. Waldspurger}, {\em {Sur les coefficients de Fourier des formes
  modulaires de poids demi-entier}}, Journal de Math{\'e}matiques Pures et
  Appliqu{\'e}es, 60 (1981), pp.~375--484.

\end{thebibliography}
\end{document}